\newtheorem{thm}{Theorem}
\newtheorem{lem}[thm]{Lemma}
\newtheorem{prop}[thm]{Proposition}
\theoremstyle{remark}
\newtheorem{rem}{Remark}
\theoremstyle{definition}
\newtheorem{dfn}[thm]{Definition}
\newcommand\R{\mathbb{R}}
\newcommand\N{\mathbb{N}}
\renewcommand\S{\mathbb{S}}
\newcommand\D{\Delta_p^n}
\newcommand\supp{{\rm supp\ }}
\newcommand\E{\mathbb{E}}
\newlength{\fixboxwidth}
\title{Average best $m$-term approximation}
\author{
Jan Vyb\'\i ral\footnote{Johann Radon Institute for Computational and
Applied Mathematics, Austrian Academy of Sciences, Altenbergerstrasse 69, A-4040 Linz, Austria,
email: {\tt  jan.vybiral@oeaw.ac.at}, Tel: +43 732 2468 5262, Fax: +43 732 2468 5212.}
}
\begin{document}

\maketitle

\begin{abstract}
We introduce the concept of average best $m$-term approximation widths with respect to 
a probability measure on the unit ball or the unit sphere of $\ell_p^n$.
We estimate these quantities for the embedding $id:\ell_p^n\to\ell_q^n$ 
with $0<p\le q\le \infty$ for the normalized cone and surface measure.
Furthermore, we consider certain tensor product weights and show that
a typical vector with respect to such a measure exhibits a strong compressible
(i.e. nearly sparse) structure. This measure may be therefore used
as a random model for sparse signals.
\end{abstract}

\noindent{\bf AMS subject classification (MSC 2010):} Primary: 41A46, Secondary: 52A20, 60B11, 94A12.

\noindent{\bf Key words:} nonlinear approximation, best $m$-term approximation,
average widths, random sparse vectors, cone measure, surface measure.

\section{Introduction}
\subsection{Best $m$-term approximation}
Let $m\in \N_0$ and let $\Sigma_m$ be the set of all sequences $x=\{x_j\}_{j=1}^\infty$ with
$$
\|x\|_{0}:=\#\,\supp x=\#\{ n\in\N: x_n\not = 0\}\le m.
$$
Here stands $\#A$ for the number of elements of a set $A$. The elements of $\Sigma_m$
are said to be \emph{$m$-sparse}.
Observe, that $\Sigma_m$ is a non-linear subset of every 
$\ell_q:=\{x=\{x_j\}_{j=1}^\infty: \|x\|_q<\infty\}$, where
$$
\|x\|_{q}:=\begin{cases}
\left(\sum_{j=1}^\infty |x_j|^q\right)^{1/q},&\quad 0<q<\infty,\\
\sup_{j\in \N}|x_j|,&\quad q=\infty.
\end{cases}
$$
For every $x\in\ell_q$, we define its \emph{best $m$-term approximation error} by
$$
\sigma_{m}(x)_q:=\inf_{y\in \Sigma_m}\|x-y\|_q.
$$
Moreover for $0<p\le q\le \infty$, we introduce the \emph{best $m$-term approximation widths}
$$
\sigma^{p,q}_{m}:=\sup_{x:\|x\|_p\le 1}\sigma_m(x)_q.
$$
The use of this concept goes back to Schmidt \cite{S} and after the work of Oskolkov \cite{O},
it was widely used in the approximation theory, cf. \cite{DNS,DJP,T}. In fact, it is the main prototype of nonlinear approximation \cite{D}. 
It is well known, that 
\begin{equation}\label{eq:1}
2^{-1/p}(m+1)^{1/q-1/p}\le \sigma^{p,q}_m\le (m+1)^{1/q-1/p},\quad m=0,1,2,\dots.
\end{equation}
The proof of \eqref{eq:1} is based on the simple fact, that (roughly speaking)
the best $m$-term approximation error of $x\in\ell_p$ is realized by subtracting the $m$ largest coefficients taken in absolute value.
Hence,
$$
 \sigma_m(x)_q = \begin{cases} \biggl(\sum_{j=m+1}^\infty (x^*_{j})^q\biggr)^{1/q},&\quad 0<q<\infty,\\
 x_{m+1}^*=\sup_{j\ge m+1} x_j^*,&\quad q=\infty,
 \end{cases}
$$
where $x^*=(x_1^*,x_2^*,\dots)$ denotes the so-called {\it non-increasing rearrangement} \cite{BS} of the vector $(|x_1|, |x_2|, |x_3|,\dots)$.

Let us recall the proof of \eqref{eq:1} in the simplest case, namely $q=\infty$.
The estimate from above then follows by 
\begin{align}\label{eq:2}
\sigma_m(x)_\infty=\sup_{j\ge m+1}x^*_j=x^*_{m+1}
\le \biggl((m+1)^{-1}\sum_{j=1}^{m+1}(x_j^*)^p\biggr)^{1/p}
\le (m+1)^{-1/p} \|x\|_p.
\end{align}
The lower estimate is supplied by taking
\begin{equation}\label{eq:3.5}
x=(m+1)^{-1/p}\sum_{j=1}^{m+1}e_j,
\end{equation}
where $\{e_j\}_{j=1}^\infty$ are the canonical unit vectors.

For general $q$, the estimate from above in \eqref{eq:1} may be obtained from \eqref{eq:2}
and H\"older's inequality
\begin{equation}\label{eq:6}
\|x\|_q\le \|x\|_p^\theta\cdot \|x\|_\infty^{1-\theta}, \quad \text{where}\quad
\frac1q=\frac\theta p.
\end{equation}
The estimate from below follows for all $q$'s by simple modification of \eqref{eq:3.5}.\\
The discussion above exhibits two effects.
\begin{enumerate}
\item[(i)] Best $m$-term approximation works particularly well, when $1/p-1/q$ is large, i.e.
if $p<1$ and $q=\infty$.
\item[(ii)] The elements used in the estimate from below (and hence the elements, where the best $m$-term
approximation performs at worse) enjoy a very special structure.
\end{enumerate}
Therefore, there is a reasonable hope, that the best $m$-term approximation could behave better,
when considered in a certain average case. But first we point out two different interesting points of view
on the subject.

\subsection{Connection to compressed sensing}

The interest in $\ell_p$ spaces (and especially in their finite-dimensional counterparts $\ell_p^n$) 
with $0<p<1$ was recently stimulated by the impressive success of the
novel and vastly growing area of \emph{compressed sensing} as introduced in \cite{CRT1,CRT2,CT,Do}.
Without going much into the details, we only note, that the techniques of compressed sensing allow to reconstruct
a vector from an incomplete set of measurements utilizing the prior knowledge, that it is sparse, i.e. $\|x\|_{0}$
is small. Furthermore, this approach may be applied \cite{CDD} also to vectors, which are \emph{compressible}, i.e. $\|x\|_p$
is small for (preferably small) $0<p<1$. Indeed, \eqref{eq:1} tells us, that such a vector $x$ may be very
well approximated by sparse vectors. 
We point to \cite{Ca,Fo,FoRa,Ra} for the current state of the art of this field and for further references.

This leads in a very natural way to a question, which stands in the background of this paper, namely:

\vskip10pt
\centerline{\emph{How does a typical vector of the $\ell^n_p$ unit ball look like?}}
\vskip10pt
or, posed in an exact way:
\vskip10pt
\emph{Let $\mu$ be a probability measure on the unit ball of $\ell_p^n$.
What is the mean value of $\sigma_m(x)_q$ with respect to this measure?}
\vskip10pt
Of course, the choice of $\mu$ plays a crucial role. There are several standard probability measures,
which are connected to the unit ball of $\ell_p^n$ in a natural way, namely (cf. Definitions \ref{dfn2} and \ref{dfn3})
\begin{itemize}
\item[(i)] the normalized Lebesgue measure,
\item[(ii)] the $n-1$ dimensional Hausdorff measure restricted to the surface of the unit ball of $\ell_p^n$
and correspondingly normalized,
\item[(iii)] the so-called normalized cone measure.
\end{itemize}

Unfortunately, it turns out, that all these three measures are ``bad'' -- a typical vector
with respect to any of them does not involve much structure and corresponds rather to noise then signal
(in the sense described below). Therefore, we are looking for a new type of measures 
(cf. Definition \ref{def14}), which would behave better from this point of view.

\subsection{Random models of noise and signals}

Random vectors play an important role in the area of signal processing. For example, if $n\in \N$ is a natural number,
$\omega=(\omega_1,\dots,\omega_n)$ is a vector of independent Gaussian variables and $\varepsilon>0$ is a real number, 
then $\varepsilon \omega$ is a classical model of noise, namely the \emph{white noise}.
This model is used in the theory but also in the real life applications of signal processing.

The random generation of a structured signal seems to be a more complicated task.
Probably the most common random model to generate sparse vectors, cf. \cite{BSFMD,CGI,GS,PKLH},
is the so-called \emph{Bernoulli-Gaussian model}.
Let again $n\in \N$ be a natural number and $\varepsilon>0$ be a real number. Also
$\omega=(\omega_1,\dots,\omega_n)$ stands for a vector of independent Gaussian variables. Furthermore,
let $0<p<1$ be a real number and let $\varrho=(\varrho_1,\dots,\varrho_n)$ be a vector of 
independent Bernoulli variables defined as
$$
\varrho_i=\begin{cases}1,&\text{with probability }p,\\
0,&\text{with probability }1-p.
\end{cases}
$$
The components of the random \emph{Bernoulli-Gaussian vector} $x=(x_1,\dots,x_n)$ are then defined through
\begin{equation}\label{eq:BG}
x_i=\varepsilon\varrho_i\cdot\omega_i,\quad i=1,\dots,n.
\end{equation}
Obviously, the average number of non-zero components of $x$ is $k:=pn.$ Unfortunately, if $k$ is
much smaller than $n$, then the concentration of the number of non-zero components of $x$ around $k$
is not very strong. This becomes better, if $k$ gets larger. But in that case, the model \eqref{eq:BG}
resembles more and more the model of white noise. In some sense, \eqref{eq:BG} represents rather
a randomly filtered white noise then a structured signal. It is one of the main aims of this paper
to find a new measure, such that a random vector with respect to this measure would show a nearly sparse
structure without the need of random filtering.

\subsection{Unit sphere}

Let us describe the situation in the most prominent case, when $p=2$, $m=0$ and $\mu=\mu_2$ is 
the normalized surface measure on the unit sphere $\S^{n-1}$ of $\ell_2^n$. Furthermore, we denote by $\gamma_n$
the standard Gaussian measure on $\R^n$ with the density
$$
\frac{1}{(2\pi)^{n/2}}e^{-\|x\|^2_2/2},\quad x\in\R^n.
$$
We use polar coordinates to calculate
\begin{align}
\notag\int_{\R^n}\max_{j=1,\dots,n}|x_j|\,d\gamma_n(x)&=\frac{1}{(2\pi)^{n/2}}\int_{\R^n}\max_{j=1,\dots,n}|x_j|\cdot e^{-\|x\|_2^2/2}dx\\
\notag&=\frac{\Omega_n}{(2\pi)^{n/2}} \int_0^\infty r^{n-1} \int_{\S^{n-1}}\max_{j=1,\dots,n}|rx_j|e^{-\|rx\|_2^2/2} d\mu_2(x)\, dr\\
\label{eq:cal1}&=\frac{\Omega_n}{(2\pi)^{n/2}} \int_0^\infty r^{n}e^{-r^2/2}dr\cdot \int_{\S^{n-1}}\max_{j=1,\dots,n}|x_j|d\mu_2(x)\\
\notag&=\frac{\Omega_n}{(2\pi)^{n/2}} \int_0^\infty r^{n}e^{-r^2/2}dr\cdot \int_{\S^{n-1}}\sigma_0(x)_\infty d\mu_2(x),
\end{align}
where $\Omega_n$ denotes the area of $\S^{n-1}$. This formula connects the expected value of $\sigma_0(x)_\infty$
with the expected value of maximum of $n$ independent Gaussian variables. 
Using that this quantity is known to be equivalent to $\sqrt{\log(n+1)}$, cf. \cite[(3.14)]{LT},
$$
\int_0^\infty r^{n}e^{-r^2/2}dr=2^{(n-1)/2}\Gamma((n+1)/2)\quad \text{and}\quad \Omega_{n}=\frac{2\pi^{n/2}}{\Gamma(n/2)},
$$ 
one obtains
\begin{equation}\label{eq:cal2}
\int_{\S^{n-1}}\sigma_0(x)_\infty d\mu_2(x)\approx\sqrt{\frac{\log(n+1)}{n}},\quad n\in \N.
\end{equation}
Several comments on \eqref{eq:cal1} and \eqref{eq:cal2} are necessary.
\begin{itemize}
\item[(i)] Quantities similar to the left-hand side of \eqref{eq:cal2} have been used in the study of geometry of Banach spaces
and local theory of Banach spaces since many years and are treated in detail in the work of Milman \cite{FLM,M,MS}. 
Especially, if $\|\cdot\|_K$ is a norm in $\R^n$
and $K:=\{x\in\R^n:\|x\|_K\le 1\}$ denotes the corresponding unit ball, then the quantity
$$
A_K=\int_{\S^{n-1}}\|x\|_K d\mu_2(x)
$$
(and the closely connected median $M_K$ of $\|x\|_K$ over $\S^{n-1}$) 
plays a crucial role in the Dvoretzky  theorem \cite{Dv,F,M} and, in general, in the study of Euclidean sections of $K$, cf. \cite[Section 5]{MS}.
Furthermore, it is known that the case of $K=[-1,1]^n$, when
$$
A_K=\int_{\S^{n-1}}\max_{j=1,\dots,n}|x_j| d\mu_2(x)=\int_{\S^{n-1}}\sigma_0(x)_\infty d\mu_2(x),
$$
is extremal, cf. \cite{M}.
\item[(ii)] The connection between the estimated value of a maximum of independent Gaussian variables
and the estimated value of the largest coordinate of a random vector on $\S^{n-1}$ is given just by integration in polar coordinates
and is one of the standard techniques in the local theory of Banach spaces.
Due to the result of \cite{SZ},
this holds true also for other values of $p$, even for $p<1$, with Gaussian variables replaced by variables with
the density $c_pe^{-|t|^p}$. This approach is nowadays
classical in the study of the geometry and concentration of measure phenomenon on the $\ell_p^n$-balls, 
cf. \cite{ABP,BP,BCN,BGMN,N,NR,RR}.
\item[(iii)] For every $x\in\S^{n-1}$ we obtain easily that $\displaystyle \max_{j=1,\dots,n}|x_j|\ge 
\Bigl(\frac{1}{n}\sum_{j=1}^n x_j^2\Bigr)^{1/2}=1/\sqrt{n}$. Estimate \eqref{eq:cal2} shows that
the average value of $\displaystyle \max_{j=1,\dots,n}|x_j|$ over ${\S}^{n-1}$ is asymptotically
larger only by a logarithmic factor. The detailed study of the concentration of $\displaystyle \max_{j=1,\dots,n}|x_j|$
around its estimated value (or its mean value) is known as \emph{concentration of measure phenomena} \cite{Le,LT,MS}
and gives more accurate information then the one included in \eqref{eq:cal2}.
As our main interest lies in estimates of \emph{average best $m$-term widths}, cf. Definition \ref{dfn1},
we do not investigate the concentration properties in this paper and leave this subject to further research.
\item[(iv)] The calculation \eqref{eq:cal1} is based on the use of polar coordinates. For $p\not=2$, the normalized cone measure is exactly that
measure, for which a similar formula holds, cf. \eqref{eq:4}. The estimates for $n-1$ dimensional surface measure
are later obtained using its density with respect to the cone measure, cf. Lemma \ref{lem7}.
\item[(v)] As we want to keep the paper self-contained as much as possible and to make it readable also for readers without (almost)
any stochastic background, we prefer to use simple and direct techniques. For example we use rather the simple estimates in Lemma \ref{lem3},
than any of their sophisticated improvements available in literature.
\item[(vi)] The connection to random Gaussian variables explains, why a random point of $\S^{n-1}$ is sometimes referred to as 
\emph{white (or Gaussian) noise}. It is usually not associated with any reasonable (i.e. structured) signal, 
rather it represents a good model for random noise.
\end{itemize}

\subsection{Basic Definitions and Main Results}

\subsubsection{Definition of average best $m$-term widths}

After describing the context of our work we shall now present the definition of the so-called
\emph{average best $m$-term widths}, which are the main subject of our study.

First, we observe, that
$$
\sigma_{m}((x_1,\dots,x_n))_q=\sigma_m((\varepsilon_1 x_1,\dots,\varepsilon_n x_n))_q=
\sigma_m((|x_1|,\dots,|x_n|))_q
$$
holds for every $x\in\R^n$ and $\varepsilon\in\{-1,+1\}^n$. Also all the measures, which we shall consider,
are invariant under any of the mappings
$$
(x_1,\dots,x_n)\to (\varepsilon_1 x_1,\dots,\varepsilon_n x_n),\quad \varepsilon\in\{-1,+1\}^n
$$
and therefore we restrict our attention only to $\R^n_+$ in the following definition.
\begin{dfn}\label{dfn1}
Let $0<p\le q\le\infty$ and let $n\ge 2$ and $0\le m\le n-1$ be natural numbers.
\begin{enumerate}
\item[(i)]We set
$$
\D = \begin{cases} \{(t_1,\dots,t_n)\in \R_+^n: \sum_{j=1}^n{t^p_j}=1\},&\quad p<\infty,\\
\{(t_1,\dots,t_n)\in \R_+^n: \max_{j=1,\dots,n} t_j=1\}, &\quad p=\infty.
\end{cases}
$$
\item[(ii)] Let $\mu$ be a Borel probability measure on $\D$. Then
$$
\sigma_m^{p,q}(\mu)=\int_{\D} \sigma_m(x)_qd\mu(x)
$$
is called {\it average surface best $m$-term width of $id:\ell_p^n\to\ell_q^n$
with respect to $\mu$}.
\item[(iii)] Let $\nu$ be a Borel probability measure on $[0,1]\cdot\D$. Then
$$
\sigma_m^{p,q}(\nu)=\int_{[0,1]\cdot\D} \sigma_m(x)_qd\nu(x)
$$
is called {\it average volume best $m$-term width of $id:\ell_p^n\to\ell_q^n$
with respect to $\nu$}.
\end{enumerate}
\end{dfn}
Let us observe, that the estimates
$$
 \sigma_{m}^{p,q}(\mu)\le \sigma_m^{p,q}\quad\text{and}\quad
 \sigma_{m}^{p,q}(\nu)\le \sigma_m^{p,q}
$$
follow trivially by Definition \ref{dfn1}.
Furthermore, the mapping $x\to \sigma_m(x)_q$ is continuous and, therefore, measurable with respect to the Borel measure $\mu$. 

\subsubsection{Main results}

After introducing new notion of average best $m$-term width in Definition \ref{dfn1}, we
study its behavior for the measures on $\D$, which are widely used in literature.
A prominent role among them is played by the so-called \emph{normalized cone measure} given by
$$
 {\mu_p}({\mathcal A})=
 \frac{\lambda([0,1]\cdot {\mathcal A})}{\lambda([0,1]\cdot \D)},\quad {\mathcal A}\subset\D.
$$
In Theorem \ref{thm6} and Proposition \ref{prop12} we provide basic estimates of $\sigma_{m}^{p,q}(\mu_p)$
for $q=\infty$ and $q<\infty$, respectively. Surprisingly enough, it turns out that \eqref{eq:cal2} has its direct counterpart 
for all $0<p<\infty$. This means (as described above), that the coordinates of a ``typical'' element of the 
surface of the $\ell_p^n$ unit ball are well concentrated around the value $n^{-1/p}$. So, roughly speaking, it is only 
$\ell_p$-normalized noise.

Another well known probability measure on $\D$ is the \emph{normalized surface measure} $\varrho_p$, cf. Definition \ref{dfn3}.
We calculate in Lemma \ref{lem7} the density of $\varrho_p$ with respect to $\mu_p$ to be equal to
$$
\frac{d\varrho_p} {d\mu_p}(x)= c^{-1}_{p,n} \biggl(\sum_{i=1}^{n}x_i^{2p-2}\biggr)^{1/2},
$$
where
$$
c_{p,n}=\int_{\D}\biggl(\sum_{i=1}^n x_i^{2p-2}\biggr)^{1/2}d \mu_p(x)
$$
is the normalizing constant. This result (which is a generalization of the work of Naor and Romik \cite{NR} to the non-convex case $0<p<1$)
might be of independent interest for the study of the geometry of $\ell_p^n$ spheres.
One observes immediately, that if $p<1$ and one or more coordinates
of $x_i$ are going to zero, then this density has a polynomial singularity and, therefore, gives more
weight to areas closed to coordinate hyperplanes.

We then obtain in Theorem \ref{thm9}
an estimate of $\sigma_{0}^{p,\infty}(\varrho_p)$ from above. Although the measure $\varrho_p$
concentrates around coordinate hyperplanes, it turns out,
that the estimate from above of $\sigma_0^{p,\infty}(\mu_p)$ as obtained in Theorem \ref{thm6} and the estimate of Theorem \ref{thm9}
differ only in the constants involved.

The last part of this paper is devoted to the search of a new probability measure on $\D$, which would ``promote sparsity'' in the sense, that
the mean value of $\sigma_m(x)_q$ decays rapidly with $m$. One possible candidate is presented in Definition \ref{def14} by introducing a new class of measures
$\theta_{p,\beta}$, which are given by their density with respect to the cone measure $\mu_p$
\begin{equation*}
\frac{d \theta_{p,\beta}}{d\mu_p}(x)=c^{-1}_{p,\beta}\cdot \prod_{i=1}^n x_i^{\beta}, \quad x\in\D,
\end{equation*}
where $c_{p,\beta}$ is a normalising constant. We refer also to Remark \ref{rem4} for an equivalent characterisation.

We show, that for an appropriate choice of $\beta$, namely $\beta=p/n-1$,
the estimated value of the $m$-th largest coefficient of elements of the $\ell_p^n$-unit sphere decays exponentially with $m$.
Namely, Theorem \ref{thm17} provides estimates of $\sigma_{m-1}^{p,\infty}(\theta_{p,p/n-1})$, which at the end imply that
\begin{align}\label{eq:intr}
\frac{C_p^1}{\left(\frac{1}{p}+1\right)^m}
\le \liminf_{n\to\infty} \sigma_{m-1}^{p,\infty}(\theta_{p,p/n-1})
\le \limsup_{n\to\infty} \sigma_{m-1}^{p,\infty}(\theta_{p,p/n-1}) 
\le \frac{C_p^2}{\left(\frac{1}{p}+1\right)^m}
\end{align}
for two positive real numbers $C_p^1$ and $C_p^2$, which depend only on $p$.

This result (which is also simulated numerically in the very last section of this paper) 
is in a certain way independent of $n$. This gives a hope, that one could apply this approach also to
the infinite-dimensional spaces $\ell_p$ or, using a suitable discretization technique (like wavelet decomposition),
also to some function spaces. This remains a subject of our further research.

Of course, the class $\theta_{p,\beta}$ provides only one example of measures with rapid decay of their average best $m$-term widths.
We leave also the detailed study of other measures with such properties open to future work.

{\bf Note added in the proof:} Let us comment on the relation of our work with recent papers of Cevher \cite{Cev} and
Gribonval, Cevher, and Davis \cite{GCD}. Cevher uses in \cite{Cev} the concept of \emph{Order Statistics} \cite{DN}
to identify the probability distributions, whose independent and identically distributed (i.i.d.) realizations
result typically in $p$-compressible signals, i.e.
$$
x^*_{i}\le C\, R\cdot i^{-1/p}.
$$
Our approach here is a bit different and more connected to the geometry of $\ell_p^n$ spaces. In accordance
with \cite{SZ}, this leads to the study of $\ell_p^n$-\emph{normalized} vectors with i.i.d. components.
This again allows us to better distinguish between the norm of such a vector (i.e. its \emph{size} or \emph{energy})
and its direction (i.e. its \emph{structure}).

The approach of the recent preprint \cite{GCD} (which was submitted during the review process of this work) 
comes much closer to ours.
Their Definition 1 of ``Compressible priors'' introduces the quantity called \emph{relative best $m$-term approximation error} as
$$
\bar\sigma_m(x)_q=\frac{\sigma_m(x)_q}{\|x\|_q},\quad x\in\R_+^n.
$$
The asymptotic behavior of this quantity for $x=(x_1,\dots,x_n)$ being a vector with i.i.d. components and
$\liminf_{n\to\infty}\frac{m_n}{n}\ge\kappa\in(0,1)$ is then used to define $q$-compressible probability distribution functions.
In contrary to \cite{GCD}, we consider $\ell_q$ approximation of $\ell_p$ normalized vectors and therefore our
widths depend on two integrability parameters $p$ and $q$.
Furthermore, we do not pose any restrictions on the ratio $m/n$ to any specific regime and consider 
the average best $m$-term widths
$\sigma^{p,q}_m(\mu)$ for all $0\le m\le n-1.$ In the only case, when we speak about asymptotics (i.e. \eqref{eq:final1} 
of Theorem \ref{thm17}), we suppose $m$ to be constant and $n$ growing to infinity.
Furthermore, Theorem 1 of \cite{GCD} shows that all distributions with bounded fourth moment
do not fit into their scheme and do not ``promote sparsity''. As we are interested in distributions, which are connected to the geometry
of $\ell_p^n$-balls (i.e. generalized Gaussian distribution and generalized Gamma distribution),
it is exactly that reason why we change the parameters of the distribution $\theta_{p,\beta}$ in dependence of $n$.
Although quite inconvenient from the mathematical point of view, it is not really clear if this presents a serious obstacle
for application of our approach. But the investigation of this goes beyond the scope of this work.

\subsubsection{Structure of the paper}

The paper is structured as follows. The rest of Section 1 gives some notation used throughout the paper.
Sections 2 and 3 provide estimates of this quantity with respect to the cone and surface measure, respectively.
In Section 4, we study a new type of measures on the unit ball of $\ell_p^n$. We show, that the typical
element with respect to those measures behaves in a completely different way compared to the situations
discussed before. Those results are illustrated by the numerical experiments described in Section 5.

\subsection{Notation}

We denote by $\R$ the set of real numbers, by $\R_+:=[0,\infty)$ the set of nonnegative real numbers
and by $\R^n$ and $\R_+^n$ their $n$-fold tensor products. The components of $x\in\R^n$ are denoted
by $x_1,\dots,x_n$. The symbol $\lambda$ stands for the Lebesgue measure on $\R^n$ and 
${\mathcal H}$ for the $n-1$ dimensional Hausdorff measure in $\R^n$. If $A\subset \R^n$ and $I\subset \R$ is an interval, 
we write $I\cdot A:=\{tx:t\in I, x\in A\}.$

We shall use very often the \emph{Gamma function}, defined by
\begin{equation}\label{eq:Gamma}
\Gamma(s):=\int_0^\infty t^{s-1}e^{-t}dt,\quad s>0.
\end{equation}
In one case, we shall use also the \emph{Beta function}
\begin{equation}\label{eq:Beta}
B(p,q):=\int_0^1 t^{p-1}(1-t)^{q-1}dt=\frac{\Gamma(p)\Gamma(q)}{\Gamma(p+q)},\quad p,q>0
\end{equation}
and the \emph{digamma function}
$$
\Psi(s):=\frac{d}{ds} \log \Gamma(s)=\frac{\Gamma'(s)}{\Gamma(s)},\quad s>0.
$$
We recommend \cite[Chapter 6]{AS} as a standard reference for both basic and more advanced properties of these functions. 
We shall need the Stirling's approximation formula 
(which was implicitly used already in \eqref{eq:cal2}) in its most simple form
\begin{equation}\label{eq:Stirl}
\Gamma(x)=\sqrt{\frac{2\pi}{x}}\left(\frac{x}{e}\right)^x\left(1+{\mathcal O}\left(\frac{1}{x}\right)\right),\quad x>0.
\end{equation}

If $a=\{a_j\}_{j=1}^\infty$ and $b=\{b_j\}_{j=1}^\infty$ are real sequences, 
then $a_j\lesssim b_j$ means, that there is an absolute constant $C>0$, such that $a_j\le C\,b_j$ for all $j=1,2,\dots$.
Similar convention is used for $a_j \gtrsim b_j$ and $a_j\approx b_j.$ The capital letter $C$ with indices (i.e. $C_p$)
denotes a positive real number depending only on the highlighted parameters and their meaning can change from one
occurrence to another. If, for any reason, we shall need to distinguish between several numbers of this type, we
shall write for example $C_p^1$ and $C_p^2$ as already done in \eqref{eq:intr}.


\section{Normalized cone measure}

In this section, we study the
average best $m$-term widths as introduced in Definition \ref{dfn1} for the most important measure 
(the so-called cone measure) on $\D$, which is well studied in the
literature within the geometry of $\ell_p^n$ spaces, cf. \cite{NR,BCN,N, BGMN}.
Essentially, we recover in Theorem \ref{thm6} an analogue of the estimate \eqref{eq:cal2} for all $0<p<\infty.$

\begin{dfn}\label{dfn2}
Let $0<p\le\infty$ and $n \ge 2$. Then 
$$
 {\mu_p}({\mathcal A})=
 \frac{\lambda([0,1]\cdot {\mathcal A})}{\lambda([0,1]\cdot \D)},\quad {\mathcal A}\subset\D
$$
is the normalized {\it cone measure} on $\D$.
\end{dfn}
If $\nu_p$ denotes the $p$-normalized Lebesgue measure, i.e.
$$
\nu_p(A)=\frac{\lambda(A)}{\lambda([0,1]\cdot \D)},\quad A\subset \R_+^n,
$$
then the connection between $\nu_p$ and $\mu_p$ is given by
\begin{equation}\label{eq:3}
\nu_p(A)=n\int_0^\infty r^{n-1}\mu_p\biggr(\frac{\{x\in A:\|x\|_p=r\}}{r}\biggl)dr.
\end{equation}
The proof of \eqref{eq:3} follows directly for sets of the type $[a,b]\cdot {\mathcal A}$ with $0<a<b<\infty$
and ${\mathcal A}\subset \D$ and is then finished by standard approximation arguments.
The formula \eqref{eq:3} may be generalized to the so-called {\it polar decomposition identity}, cf. \cite{BCN},
\begin{equation}\label{eq:4}
\frac{\displaystyle\int_{\R_+^n}f(x)d\lambda(x)}{\lambda([0,1]\cdot\D)}=
n\int_0^\infty r^{n-1}\int_{\D}f(rx)d\mu_p(x)dr,
\end{equation}
which holds for every $f\in L_1(\R_+^n)$.

The formula \eqref{eq:4} allows to transfer immediately the results for the average surface best $m$-term
approximation with respect to $\mu_p$ to the average volume approximation with respect to $\nu_p$.

\begin{prop}\label{prop10} 
The identity
$$
 \sigma_{m}^{p,q}(\nu_p)=\sigma_{m}^{p,q}(\mu_p)\cdot \frac{n}{n+1}
$$
holds for all $0<p\le q\le \infty$, all $n\ge 2$ and all $0\le m\le n-1$.
\end{prop}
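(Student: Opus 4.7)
The proof is a direct application of the polar decomposition identity \eqref{eq:4} combined with the positive homogeneity of the best $m$-term approximation error. I would proceed in three short steps.

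First, I would record the scaling property of $\sigma_m(\cdot)_q$: since $\Sigma_m$ is a cone, for every $r\ge 0$ and every $x\in\R_+^n$,
$$
\sigma_m(rx)_q=\inf_{y\in\Sigma_m}\|rx-y\|_q = r\inf_{z\in\Sigma_m}\|x-z\|_q=r\,\sigma_m(x)_q.
$$
Second, I would rewrite $\sigma_{m}^{p,q}(\nu_p)$ using the density of $\nu_p$ with respect to Lebesgue measure,
$$
\sigma_{m}^{p,q}(\nu_p)=\frac{1}{\lambda([0,1]\cdot\D)}\int_{[0,1]\cdot\D}\sigma_m(x)_q\, d\lambda(x),
$$
and observe that, for $x\in\D$, one has $rx\in[0,1]\cdot\D$ if and only if $r\in[0,1]$; this is immediate from the definition of $\D$ (either $\|rx\|_p=r$ when $p<\infty$, or $\max_j rx_j=r$ when $p=\infty$).

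Third, I would apply \eqref{eq:4} to the function $f(x)=\sigma_m(x)_q\,\mathbf{1}_{[0,1]\cdot\D}(x)$, which is $\lambda$-integrable because $\sigma_m(x)_q\le \|x\|_q\le \|x\|_p\le 1$ on $[0,1]\cdot\D$. Substituting and using the homogeneity from the first step yields
$$
\sigma_{m}^{p,q}(\nu_p)=n\int_0^1 r^{n-1}\int_{\D}\sigma_m(rx)_q\,d\mu_p(x)\,dr
= n\int_0^1 r^n\,dr\cdot \int_{\D}\sigma_m(x)_q\,d\mu_p(x)=\frac{n}{n+1}\,\sigma_{m}^{p,q}(\mu_p),
$$
as claimed.

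There is essentially no obstacle here; the only point requiring a line of justification is the equivalence $rx\in[0,1]\cdot\D \Leftrightarrow r\in[0,1]$ for $x\in\D$, which lets the outer radial integral in \eqref{eq:4} be truncated to $[0,1]$. The identity then falls out from the Fubini-type separation of radial and spherical variables inherent in the polar decomposition formula.
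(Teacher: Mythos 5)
Your proof is correct and follows essentially the same route as the paper: both plug $f(x)=\sigma_m(x)_q\,\chi_{[0,1]\cdot\D}(x)$ into the polar decomposition identity \eqref{eq:4} and use the homogeneity $\sigma_m(rx)_q=r\,\sigma_m(x)_q$ to reduce the radial integral to $n\int_0^1 r^n\,dr=\frac{n}{n+1}$. Your explicit justification of the truncation of the radial integral and of integrability is a minor elaboration of details the paper leaves implicit.
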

\begin{proof}
We plug the function
$$
f(x)=\sigma_m(x)_q\cdot\chi_{[0,1]\cdot\D}(x)
$$
into \eqref{eq:4} and obtain
\begin{align*}
\frac{\displaystyle\int_{[0,1]\cdot\D}\sigma_m(x)_qd\lambda(x)}{\lambda([0,1]\cdot \D)}&=
\int_{[0,1]\cdot\D}\sigma_m(x)_q d\nu_p(x)\\
&=n\int_0^1 r^{n-1}\int_{\D}\sigma_m(rx)_qd\mu_p(x)dr
=n\int_0^1r^ndr\cdot \sigma_{m}^{p,q}(\mu_p),
\end{align*}
which gives the result.
\end{proof}
Proposition \ref{prop10} shows, that the ratio between 
approximation with respect to $\mu_p$ and $\nu_p$
is equal to $1+1/n$. This justifies our interest in measures on $\D$.
Furthermore, it shows that the quantities
$\sigma_{m}^{p,q}(\nu_p)$ and $\sigma_{m}^{p,q}(\mu_p)$ behave asymptotically (i.e. for $n\to\infty$)
very similarly.

Let $p=2$ and let $\omega_1,\dots,\omega_n$ be independent normally distributed Gaussian random variables.
Then
$$
\varrho_2({\mathcal A})=\mu_2({\mathcal A})=
{\mathbb P}\Biggl(\frac{(|\omega_1|,\dots,|\omega_n|)}{\bigl(\sum_{j=1}^n{\omega_j^2}\bigr)^{1/2}}\in 
{\mathcal A}\Biggr),\qquad {\mathcal A}\subset \Delta_2^n.
$$
As noted in \cite{SZ}, this relation may be generalized to all values of $p$ with $0<p<\infty$.
Let $\omega_1,\dots,\omega_n$ be independent random variables on $\R_+$ each with density
$$
  c_p e^{-t^p},\quad t\ge 0
$$
with respect to the Lebesgue measure, where $c_p=\frac{p}{\Gamma(1/p)}=\frac{1}{\Gamma(1/p+1)}$.

Then, cf. \cite[Lemma 1]{SZ},
\begin{equation}\label{eq:gener1}
\mu_p({\mathcal A})=
{\mathbb P}\Biggl(\frac{(\omega_1,\dots,\omega_n)}{\bigl(\sum_{j=1}^n{\omega_j^p}\bigr)^{1/p}}\in 
{\mathcal A}\Biggr), \qquad {\mathcal A}\subset\D.
\end{equation}
We shall fix $\omega_1,\dots,\omega_n$ to the end of this paper. Also the symbols 
${\mathbb E}$ and ${\mathbb P}$ are always taken with respect to these variables.

\subsection{The case $q=\infty$}
In this section we deal with uniform approximation, i.e. with the case $q=\infty$.
To be able to imitate the calculation \eqref{eq:cal1}, we shall need several tools,
which are subject of Lemmas \ref{lem1}, \ref{lem3} and \ref{lem2}.
Our main result of this section (Theorem \ref{thm6}) then provides the estimate 
of $\sigma_m^{p,\infty}(\mu_p)$ from above for all $m$ with $0\le m\le n-1$. Furthermore,
it is shown that in the range $0\le m\le \varepsilon_p n$ this estimate is also optimal.

\begin{lem}\label{lem1} 
Let $0<p<\infty$ and let $n\ge 2$ and $1\le m\le n$ be natural numbers. Then
$$
\int_{\D}x_m^*d\mu_p(x)=
\frac{\Gamma(n/p)}{\Gamma(n/p+1/p)}\cdot {\mathbb E\,}x_m^*.
$$
Furthermore, there are two positive real numbers $C_p^1$ and $C_p^2$ depending only on $p$, such that
$$
C_p^1\cdot \frac{{\mathbb E\,}x_m^*}{n^{1/p}}\le \int_{\D}x_m^*d\mu_p(x)\le C_p^2\cdot \frac{{\mathbb E\,}x_m^*}{n^{1/p}}.
$$
\end{lem}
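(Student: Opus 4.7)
The plan is to exploit the probabilistic representation \eqref{eq:gener1} of $\mu_p$ together with the fact that, under the joint distribution of $\omega=(\omega_1,\dots,\omega_n)$, the norm $R:=\|\omega\|_p$ and the direction $Y:=\omega/R$ are independent. Once that independence is in hand, the identity is a one-line consequence, and the two-sided bound reduces to an application of Stirling.

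First, I reduce both sides to expectations with respect to $\omega$. By \eqref{eq:gener1}, $Y$ is distributed according to $\mu_p$ on $\D$, so $\int_\D x_m^*\,d\mu_p(x)=\E Y_m^*$. Since $R$ is a positive scalar, dividing all coordinates of $\omega$ by $R$ does not alter their ranking; thus $Y_m^*=\omega_m^*/R$.

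Next, I establish the independence of $R$ and $Y$. The joint density of $\omega$ on $\R_+^n$ equals $c_p^n e^{-\|x\|_p^p}$, which depends on $x$ only through $\|x\|_p$. Plugging a test function of the form $f(x)=g(\|x\|_p)\,h(x/\|x\|_p)$ into the polar decomposition identity \eqref{eq:4} yields
$$
\E[g(R)h(Y)]=\left(c_p^n\,\lambda([0,1]\cdot\D)\,n\int_0^\infty r^{n-1}e^{-r^p}g(r)\,dr\right)\cdot\int_\D h(y)\,d\mu_p(y).
$$
The factorization shows simultaneously that $R$ and $Y$ are independent, that $Y\sim\mu_p$ (which pins down the normalizing constant), and that the density of $R$ on $(0,\infty)$ is $f_R(r)=\frac{p}{\Gamma(n/p)}\,r^{n-1}e^{-r^p}$, after computing $\int_0^\infty r^{n-1}e^{-r^p}\,dr=\Gamma(n/p)/p$ via the substitution $u=r^p$.

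The same substitution in $\E R=\int_0^\infty r\,f_R(r)\,dr$ gives $\E R=\Gamma(n/p+1/p)/\Gamma(n/p)$. Independence then yields $\E\omega_m^*=\E R\cdot \E Y_m^*$, so
$$
\int_\D x_m^*\,d\mu_p(x)=\E Y_m^*=\frac{\Gamma(n/p)}{\Gamma(n/p+1/p)}\,\E\omega_m^*,
$$
which is the first claim. For the two-sided estimate I apply Stirling's formula \eqref{eq:Stirl} to the ratio $\Gamma(n/p)/\Gamma(n/p+1/p)$, which gives the asymptotic $(n/p)^{-1/p}$ as $n\to\infty$; hence constants $C_p^1,C_p^2>0$ exist with $C_p^1\,n^{-1/p}\le\Gamma(n/p)/\Gamma(n/p+1/p)\le C_p^2\,n^{-1/p}$ for all sufficiently large $n$. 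The remaining finitely many values $n\ge 2$ are absorbed into the constants since the ratio is continuous and strictly positive.

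The main obstacle is the rigorous extraction of the independence of $R$ and $Y$ from \eqref{eq:4}; once the factorization above is written down, the rest of the argument consists of the substitution $u=r^p$ and a direct appeal to Stirling's asymptotics.
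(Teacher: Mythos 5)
Your argument is correct, and it rests on the same two ingredients as the paper's proof: the polar decomposition identity \eqref{eq:4} applied to the product density $c_p^n e^{-\|x\|_p^p}$, and Stirling's formula \eqref{eq:Stirl}. The packaging of the first step is slightly different. You extract from \eqref{eq:4} the full radial--angular factorization, i.e.\ the independence of $R=\|\omega\|_p$ and $Y=\omega/R$, with $Y\sim\mu_p$ and $f_R(r)=\frac{p}{\Gamma(n/p)}r^{n-1}e^{-r^p}$, and then read off the identity from $\E\,\omega_m^*=\E R\cdot\E\,Y_m^*$ together with $Y_m^*=\omega_m^*/R$ and $\E R=\Gamma(n/p+1/p)/\Gamma(n/p)$. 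The paper instead plugs the single function $x_m^*e^{-x_1^p-\dots-x_n^p}$ into \eqref{eq:4} and evaluates all constants explicitly, which requires Dirichlet's formula for $\lambda([0,1]\cdot\D)$ and the value of $c_p$; your route makes those constants cancel automatically by normalization of $f_R$ (taking $g\equiv h\equiv 1$ in your factorization), at the small cost of stating the factorization for general test functions $g(\|x\|_p)\,h(x/\|x\|_p)$ — which is immediate from \eqref{eq:4} since $\|rx\|_p=r$ and $rx/\|rx\|_p=x$ for $x\in\D$, $r>0$. Both proofs finish identically with Stirling, and your treatment of the finitely many small $n$ via positivity and continuity of the ratio $n^{1/p}\Gamma(n/p)/\Gamma(n/p+1/p)$ is a legitimate way to get constants depending only on $p$.
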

\begin{proof}
We put $f(x)=x_m^*e^{-x_1^p-\dots-x_n^p}$ and use the polar decomposition identity \eqref{eq:4}
\begin{align*}
\frac{\displaystyle\int_{\R_+^n}x_m^*e^{-x_1^p-\dots-x_n^p}d\lambda(x)}{\lambda([0,1]\cdot \D)}
&=n\int_0^\infty r^{n-1}\int_{\D} (rx_m^*)\cdot e^{-(rx_1)^p-\dots-(rx_n)^p}d\mu_p(x)dr\\
&=n\int_0^\infty r^{n-1}\cdot re^{-r^p}dr\int_{\D} x_m^*d\mu_p(x)
\end{align*}
or, equivalently,
\begin{equation}\label{eq:unkn}
\int_{\D}x_m^*d\mu_p(x)=\frac{\displaystyle\int_{\R_+^n}x_m^*e^{-x_1^p-\dots-x_n^p}d\lambda(x)}
{\lambda([0,1]\cdot \D)\cdot n\int_{0}^\infty r^ne^{-r^p}dr}.
\end{equation}
The identity
$$
\int_0^\infty r^ne^{-r^p}dr=\frac {\Gamma(n/p+1/p)}{p},
$$
follows by a simple substitution. Furthermore, we shall need the classical formula of Dirichlet
for the volume of the unit ball $B_{\ell^n_p}$ of $\ell_p^n$, cf. \cite[p. 157]{E},
$$
\lambda([0,1]\cdot \D)=\frac{\lambda(B_{\ell_p^n})}{2^n}=\frac{\Gamma(1/p+1)^n}{\Gamma(n/p+1)}.
$$
This allows us to reformulate \eqref{eq:unkn} as
$$
\int_{\D}x_m^*d\mu_p(x)=\frac{\Gamma(n/p+1)\,{\mathbb E\,} x_m^*}
{c_p^n\cdot n/p\cdot\Gamma(n/p+1/p)\Gamma(1/p+1)^n}=
\frac{\Gamma(n/p)\,{\mathbb E\,}x_m^*}{\Gamma(n/p+1/p)}.
$$
Finally, we use Stirling's formula \eqref{eq:Stirl} to estimate
\begin{align*}
\frac{n^{1/p}\cdot \Gamma(n/p) }{\Gamma(n/p+1/p)}
\le C_p^1 \frac{n^{1/p}(n/p)^{n/p-1/2}}{(n/p+1/p)^{n/p+1/p-1/2}}
\le C_p^2\biggl(\frac{n}{n+1}\biggr)^{n/p+1/p-1/2}\le C_p^3
\end{align*}
and similarly for the estimate from below.
\end{proof}

\begin{lem}\label{lem3}
Let $\alpha\in \R$ and $\delta>0$. Then
$$
 \int_\delta^\infty u^{\alpha}e^{-u}du\le
 \delta^\alpha e^{-\delta}\cdot \begin{cases}
 1,\qquad &\text{if}\quad \alpha\le 0,\\
 \frac{1}{1-\alpha/\delta},\qquad &\text{if}\quad \alpha>0\quad \text{and}\quad \frac{\alpha}{\delta}<1,\\
 \bigl(\frac{\alpha}{\delta}\bigr)^{\alpha}\cdot
 \frac{\alpha/\delta}{1-\delta/\alpha},\qquad &\text{if}\quad \alpha>0\quad \text{and}\quad \frac{\alpha}{\delta}>1.\\
 \end{cases}
$$
\end{lem}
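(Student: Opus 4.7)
For the first case ($\alpha \le 0$), the map $u \mapsto u^\alpha$ is non-increasing on $(0,\infty)$, so $u^\alpha \le \delta^\alpha$ throughout the domain of integration, and the claim follows at once from $\int_\delta^\infty e^{-u}\,du = e^{-\delta}$.

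For the second case ($\alpha > 0$, $\alpha/\delta < 1$), my plan is the substitution $u = \delta v$, which rewrites the integral as $\delta^{\alpha+1}\int_1^\infty v^\alpha e^{-\delta v}\,dv$. The elementary concavity inequality $\log v \le v - 1$, valid for $v \ge 1$, yields $v^\alpha \le e^{\alpha(v-1)}$, so the integrand is dominated by $e^{(\alpha - \delta)v - \alpha}$. The assumption $\alpha < \delta$ is exactly what makes this exponential integrable on $[1, \infty)$, and the direct computation $\int_1^\infty e^{(\alpha-\delta)v - \alpha}\,dv = e^{-\delta}/(\delta - \alpha)$ recovers the stated bound $\delta^\alpha e^{-\delta}/(1 - \alpha/\delta)$.

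The third case ($\alpha > \delta > 0$) is the main obstacle. After the same substitution $u = \delta v$, the integrand $v^\alpha e^{-\delta v}$ now attains its peak at $v^* = \alpha/\delta > 1$, in the interior of the new domain, and the Case~2 concavity bound no longer produces an integrable tail. My plan is to split at $v^*$: on $[1, v^*]$, where the integrand is monotone increasing, I would bound it crudely by its maximum $(\alpha/\delta)^\alpha e^{-\alpha}$ times the interval length $(\alpha-\delta)/\delta$; on $[v^*, \infty)$, where it is monotone decreasing, I would invoke the tangent-line inequality $\log v \le \log v_0 + (v - v_0)/v_0$ at a suitably chosen anchor $v_0 > v^*$ so that the resulting tail $\int_{v^*}^\infty e^{(\alpha/v_0 - \delta)v}\,dv$ converges. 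The critical step is the choice of $v_0$: tuning it carefully is what allows both contributions to be absorbed into the specific product form $\delta^\alpha e^{-\delta}(\alpha/\delta)^\alpha(\alpha/\delta)/(1 - \delta/\alpha)$ on the right-hand side of the lemma. I expect this bookkeeping—ultimately a comparison of polynomial-times-exponential quantities that reduces to $1 + t \le e^t$ applied to $t = \alpha - \delta$—to be the most delicate part of the argument.
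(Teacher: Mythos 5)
Your first two cases are fine. The case $\alpha\le 0$ is exactly the paper's argument, and your treatment of $0<\alpha<\delta$ is correct and takes a genuinely different route: after $u=\delta v$, the bound $v^\alpha\le e^{\alpha(v-1)}$ gives precisely $\delta^{\alpha+1}\cdot e^{-\delta}/(\delta-\alpha)=\delta^\alpha e^{-\delta}\frac{1}{1-\alpha/\delta}$, with no slack. The paper instead handles all three cases by one mechanism: iterated integration by parts yields $\int_\delta^\infty u^\alpha e^{-u}\,du\le \delta^\alpha e^{-\delta}\bigl(1+\frac{\alpha}{\delta}+\dots+\frac{\alpha^k}{\delta^k}\bigr)$ with $k-1<\alpha\le k$, and the finite geometric sum is then bounded by $\frac{1}{1-\alpha/\delta}$ or by $\bigl(\frac{\alpha}{\delta}\bigr)^{\alpha+1}\frac{1}{1-\delta/\alpha}$ according to the size of $\alpha/\delta$; this buys a uniform, two-line treatment of the regime $\alpha>\delta$ that your substitution method must work hard for.

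That regime is where your proposal has a genuine gap: Case 3 is a plan, not a proof — the anchor $v_0$ is never chosen and the absorption into the stated constant is never carried out. This matters because the natural choices fail. With $v_0=c\,\alpha/\delta$ for a fixed $c>1$, the tangent bound costs a factor $c^\alpha e^{\alpha(1/c-1)}=e^{\alpha(\ln c+1/c-1)}$ whose exponent rate is strictly positive, while the only compensating gain against the right-hand side is $e^{-(\alpha-\delta)}$ times polynomial factors; taking, say, $\delta=\alpha-1$ and $\alpha\to\infty$, the tail estimate your method produces exceeds the claimed bound $\delta^\alpha e^{-\delta}\bigl(\frac{\alpha}{\delta}\bigr)^{\alpha}\frac{\alpha/\delta}{1-\delta/\alpha}$, so no fixed multiplicative anchor can close the argument. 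To rescue the scheme one must let the anchor collapse onto the peak, $v_0=\frac{\alpha}{\delta}(1+\epsilon)$ with $\epsilon$ of order $\alpha^{-1/2}$, control the cost via a second-order estimate such as $\ln(1+\epsilon)-\frac{\epsilon}{1+\epsilon}\le\frac{\epsilon^2}{2}$ (your proposed reduction to $1+t\le e^t$ with $t=\alpha-\delta$ is not the relevant inequality here), and then still check small $\alpha$ separately so that bulk plus tail fit under the lemma's explicit constant, which leaves no room for an extra absolute factor. None of this bookkeeping is present, so the third case remains unproven as written; the paper's integration-by-parts/geometric-series argument is the simpler way out.
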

\begin{proof}
If $\alpha\le 0$, we may estimate
$$
 \int_\delta^\infty u^{\alpha}e^{-u}du\le \delta^{\alpha}
 \int_\delta^\infty e^{-u}du=\delta^{\alpha}e^{-\delta}.
$$
If $0<\alpha\le 1$, we use partial integration and obtain
$$
 \int_\delta^\infty u^{\alpha}e^{-u}du=
 \delta^\alpha e^{-\delta}+\alpha\int_\delta^\infty u^{\alpha-1}e^{-u}du
 \le \delta^\alpha e^{-\delta}(1+\alpha\delta^{-1}).
$$
This is smaller than 
$$
\delta^\alpha e^{-\delta} (1+\frac{\alpha}{\delta}+\frac{\alpha^2}{\delta^2}+\dots)=\delta^\alpha
e^{-\delta}\cdot \frac{1}{1- \alpha/\delta}
$$
if $\alpha/\delta<1$ and smaller than
$$
 \delta^\alpha e^{-\delta}\frac{\alpha}{\delta}(1+\frac{\delta}{\alpha}+\frac{\delta^2}{\alpha^2}+\dots)=
 \delta^\alpha e^{-\delta}\frac{\alpha}{\delta}\cdot \frac{1}{1-\delta/\alpha}.
$$
if $\alpha/\delta>1$.

If $k-1< \alpha\le k$ for some $k\in \N$, we iterate the partial integration and arrive at
\begin{align*}
 \int_\delta^\infty u^{\alpha}e^{-u}du&\le
 \delta^\alpha e^{-\delta}(1+\alpha\delta^{-1}+\alpha(\alpha-1)\delta^{-2}+
 \dots+\alpha(\alpha-1)\dots(\alpha-k+1)\delta^{-k})\\
 &\le \delta^\alpha e^{-\delta}
 (1+\frac{\alpha}{\delta}+\frac{\alpha^2}{\delta^2}+\dots+\frac{\alpha^k}{\delta^k})\\
 &\le \delta^\alpha e^{-\delta}\begin{cases}
 \frac{1}{1-\alpha/\delta},\quad \text{if}\quad \alpha/\delta<1,\\
 \bigl(\frac{\alpha}{\delta}\bigr)^{\alpha+1}\frac{1}{1-\delta/\alpha},\quad \text{if}\quad \alpha/\delta>1.\\
 \end{cases}
\end{align*}
\end{proof}

\begin{lem}\label{lem2}
Let $0< p<\infty$. Then there is a positive real number $C_p$, such that
$$
 {\mathbb E\,}x_m^*\le C_p \log^{1/p}\Bigl(\frac{en}{m}\Bigr)
$$
for all $1\le m\le n$.
\end{lem}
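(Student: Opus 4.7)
The plan is to start from the layer-cake identity
\[
\mathbb{E}\, x_m^* = \int_0^\infty \mathbb{P}(x_m^* > t)\, dt
\]
and combine the trivial bound $\mathbb{P}(x_m^* > t) \le 1$ with the union bound
\[
\mathbb{P}(x_m^* > t) = \mathbb{P}\bigl(\#\{i:\omega_i > t\} \ge m\bigr) \le \binom{n}{m}\mathbb{P}(\omega_1 > t)^m \le \Bigl(\frac{en}{m}\Bigr)^{m}\mathbb{P}(\omega_1 > t)^m.
\]
For a threshold $T = T_{n,m,p} > 0$ of order $\log^{1/p}(en/m)$ to be chosen, I would split
\[
\mathbb{E}\, x_m^* \le T + \int_T^\infty \Bigl(\frac{en}{m}\Bigr)^{m}\mathbb{P}(\omega_1 > t)^m\, dt.
\]

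The first step is a Gamma-type tail estimate for the scalar $\omega_1$. Substituting $u = s^p$ in $\mathbb{P}(\omega_1 > t) = c_p\int_t^\infty e^{-s^p}\, ds$ produces the incomplete Gamma integral $\frac{c_p}{p}\int_{t^p}^\infty u^{1/p-1}e^{-u}\, du$, to which Lemma \ref{lem3} applies with $\alpha = 1/p - 1$ and $\delta = t^p$, yielding
\[
\mathbb{P}(\omega_1 > t) \le C_p\,\max(1, t^{1-p})\, e^{-t^p}, \qquad t \ge t_0(p).
\]
Since $t^{1-p} \le e^{t^p/2}$ for $t \ge t_1(p)$ (for every $p > 0$), the polynomial prefactor can be absorbed into the exponential, giving the uniform bound $\mathbb{P}(\omega_1 > t) \le C_p'\, e^{-t^p/2}$ valid on $[t_0^*(p), \infty)$.

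Now I would choose $T$ by $T^p := 2\log(en/m) + K_p$, where $K_p$ depends only on $p$ and is large enough to ensure both $T \ge t_0^*(p)$ and $(en/m)^m (C_p')^m e^{-mT^p/2} \le 2^{-m}$. Substituting $v = mt^p/2$ in the tail integral and invoking Lemma \ref{lem3} a second time (now with $\alpha = 1/p - 1$ and $\delta = mT^p/2$) gives
\[
\int_T^\infty e^{-mt^p/2}\, dt \le \frac{C_p\, T^{1-p}}{m}\, e^{-mT^p/2},
\]
so that
\[
\int_T^\infty \Bigl(\frac{en}{m}\Bigr)^m \mathbb{P}(\omega_1 > t)^m\, dt \le C_p'\cdot 2^{-m}\,\frac{T^{1-p}}{m} \le C_p''\, T^{1-p}.
\]
Combining this with $T \le (2\log(en/m) + K_p)^{1/p} \le C_p\log^{1/p}(en/m)$ (using $\log(en/m) \ge 1$) and with $T^{1-p} \le T$ when $p \le 1$ or $T^{1-p} \le 1$ when $p \ge 1$, we arrive at $\mathbb{E}\, x_m^* \le T + C_p''\, T^{1-p} \le C_p\log^{1/p}(en/m)$, as claimed.

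The principal obstacle is the polynomial factor $t^{1-p}$ produced by Lemma \ref{lem3} when $p < 1$: the naive choice $T^p = \log(en/m) + K_p$ leaves an unbounded factor $(T^{1-p})^m$ in the tail estimate that is not controlled uniformly in $m$ and $n$. The trick of absorbing $t^{1-p}$ into $e^{t^p/2}$ costs a factor $1/2$ in the exponent but is paid back by a harmless constant factor $2^{1/p}$ in $T$. For $p \ge 1$ the prefactor is already $\le 1$ for $t \ge 1$ and the argument simplifies. The endpoint case $m = n$ is immediate since $x_n^* = \min_i \omega_i \le \omega_1$ gives $\mathbb{E}\, x_n^* \le \mathbb{E}\, \omega_1 = C_p$, consistent with $\log^{1/p}(e) = 1$.
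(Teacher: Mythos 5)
Your proof is correct and follows essentially the same route as the paper's: the layer-cake formula, the union bound $\mathbb{P}(x_m^*>t)\le\binom{n}{m}\mathbb{P}(\omega_1>t)^m$ with $\binom{n}{m}\le(en/m)^m$, the scalar tail estimate obtained from Lemma \ref{lem3} after the substitution $u=s^p$, and a threshold of order $\log^{1/p}(en/m)$. The only (harmless) deviation is in the case $p<1$: you absorb the prefactor $t^{1-p}$ into $e^{-t^p/2}$ before raising to the $m$-th power, at the cost of a factor $2^{1/p}$ in the threshold, whereas the paper keeps the factor $\delta^{(1-p)(m+1)}$ explicit and eliminates it by enlarging the constant in the choice $\delta=C_p'\log^{1/p}(en/m)$.
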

\begin{proof}
We estimate
\begin{align}
\notag{\mathbb E}\, x_m^*&=\int_0^\infty {\mathbb P}(\omega_m^*>t)dt
=\delta+\int_\delta^\infty {\mathbb P}(\omega_m^*>t)dt\\
&\label{eq:7} \le \delta+\binom{n}{m}\int_{\delta}^\infty {\mathbb P}(\omega_1>t,\omega_2>t,\dots,\omega_m>t)dt\\
&\notag= \delta+\binom{n}{m}\int_{\delta}^\infty {\mathbb P}(\omega_1>t)^mdt.
\end{align}
The parameter $\delta>\max(1,3(1/p-1))^{1/p}$ is to be chosen later on.
We substitute $v=u^p$ and obtain
\begin{align*}
{\mathbb P}(\omega_1>t)=c_p \int_t^\infty e^{-u^p}du
=\frac{c_p}{p}\int^\infty_{t^p}v^{1/p-1}e^{-v}dv.
\end{align*}
Using the first two estimates of Lemma \ref{lem3} (recall that $t^p\ge \delta^p>\max(1,3(1/p-1))$), we arrive at
$$
{\mathbb P}(\omega_1>t)\le C_pt^{1-p}e^{-t^p},
$$
where $C_p$ depends only on $p$. We plug this estimate into \eqref{eq:7} and obtain
\begin{equation}\label{eq:delta}
{\mathbb E\,}x_m^*\le \delta+\binom{n}{m}(C_p)^m
\int_\delta^\infty t^{m(1-p)}e^{-mt^p}dt.
\end{equation}
If $p\ge 1$, then 
\begin{align*}
\int_\delta^\infty t^{m(1-p)}e^{-mt^p}dt\le 
\delta^{m(1-p)}\int_\delta^\infty e^{-mt^p}dt
\le \delta^{m(1-p)} \int_{m\delta^{p}}^\infty e^{-u}u^{1/p-1}du
\le e^{-m\delta^p}.
\end{align*}
Altogether, we obtain
$$
{\mathbb E\,}x_m^*\le \delta + \binom{n}{m}(C_p)^m e^{-m\delta^p}.
$$
Using $\binom{n}{m}\le (\frac{en}{m})^m$ and choosing $\delta=C_p'\ln(\frac{en}{m})^{1/p}$
finishes the proof.

If $p<1$, we use again the second estimate of Lemma \ref{lem3}
\begin{align*}
\int_\delta^\infty t^{m(1-p)}e^{-mt^p}dt
&=\frac{1}{mp}\cdot m^{(1/p-1)(m+1)} \int_{m\delta^p}^\infty
u^{(1/p-1)(m+1)}e^{-u}du\\
&\le \frac{1}{mp} \cdot \delta^{(1-p)(m+1)}e^{-m\delta^p}\cdot\frac{1}{1-\frac{2(1/p-1)}{\delta^p}}
\le C'_p\delta^{(1-p)(m+1)}e^{-m\delta^p}.
\end{align*}
Using \eqref{eq:delta} and $\binom{n}{m}\le (\frac{en}{m})^m$ again, we get
\begin{align*}
{\mathbb E\,}x_1^*&\le \delta+\exp(-m\delta^p+m\ln(en/m)+(1-p)(m+1)\ln \delta + m\ln C_p+\ln C_p')\\
&\le \delta+\exp[-m(\delta^p+C_p\ln(en/m)+2(1-p)\ln \delta)]
\end{align*}
The choice $\delta=C_p'\ln(\frac{en}{m})^{1/p}$ with $C'_p$ large enough ensures, that
$$
\frac {\delta^p}{2}\ge C_p\ln(en/m)\quad\text{and}\quad \frac {\delta^p}{2}\ge 2(1-p)\ln \delta
$$
and finishes the proof.
\end{proof}

The following theorem gives the basic estimates of $\sigma^{p,\infty}_m(\mu_p)$.

\begin{thm} \label{thm6}
Let $0< p\le \infty$ and let $n\ge 2$.\\
(i) Let $0\le m\le n-1$. Then
\begin{equation}\label{thm6:1}
\sigma^{p,\infty}_m(\mu_p)\le C_p
\Biggl[\frac{\log\Bigl(\frac{en}{m+1}\Bigr)}{n}\Biggr]^{1/p}.
\end{equation}
(ii) There is a number $0<\varepsilon_p<1$, such that for $0\le m\le \varepsilon_p n$ the following estimate holds
\begin{equation}\label{thm6:2}
\sigma_m^{p,\infty}(\mu_p)\ge C_p \biggl[\frac{\log(\frac{en}{m+1})}{n}\biggr]^{1/p}.
\end{equation}
\end{thm}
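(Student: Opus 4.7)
Since $\sigma_m(x)_\infty = x^*_{m+1}$, the problem reduces to estimating $\int_{\D}x^*_{m+1}\,d\mu_p(x)$. Lemma~\ref{lem1} identifies this integral with $\bigl(\Gamma(n/p)/\Gamma(n/p+1/p)\bigr)\cdot \E\omega^*_{m+1}$, which is of the order of $\E\omega^*_{m+1}/n^{1/p}$. Both parts of the theorem therefore reduce to matching two-sided bounds on the expectation of the $(m+1)$-st order statistic of $n$ i.i.d.\ variables with density $c_p e^{-t^p}$ on $\R_+$.

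Part (i) follows immediately from Lemma~\ref{lem2}, which already supplies the desired upper bound $\E\omega^*_{m+1}\le C_p\log^{1/p}(en/(m+1))$.

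For part (ii), I would derive a matching lower bound $\E\omega^*_{m+1}\gtrsim \log^{1/p}(en/(m+1))$ in the range $m+1\le \varepsilon_p n$. The natural route is to use $\E\omega^*_{m+1}\ge \delta\cdot\P(\omega^*_{m+1}\ge\delta)$ with the choice $\delta:=c\,\log^{1/p}(en/(m+1))$ for a small $p$-dependent constant $c>0$. The event $\{\omega^*_{m+1}\ge\delta\}$ coincides with $\{N_\delta\ge m+1\}$, where $N_\delta:=\#\{i\le n:\omega_i\ge\delta\}$ is $\mathrm{Binomial}(n,p_\delta)$ with $p_\delta:=\P(\omega_1\ge\delta)$. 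My plan is to calibrate $c$ so that $np_\delta\ge 2(m+1)$ and then invoke a second-moment bound:
\[
\P(N_\delta<m+1)\le \P\bigl(|N_\delta-np_\delta|\ge np_\delta/2\bigr)\le \frac{4}{np_\delta}\le \frac{2}{m+1}\le \tfrac12,
\]
valid once $m+1\ge 4$. The boundary cases $m\in\{0,1,2\}$ can be absorbed into the constant, since $\log(en/(m+1))$ is bounded there and any crude lower bound such as $\E\omega^*_1\ge \E\omega_1>0$ already yields the right order in $n$ after dividing by $n^{1/p}$.

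The main obstacle is producing the tail lower bound
\[
p_\delta\;\gtrsim_p\;\delta^{1-p}e^{-\delta^p}\qquad\text{for }\delta\ge\delta_0(p),
\]
which is the companion of the upper tail estimate used in the proof of Lemma~\ref{lem2}. This should follow by substituting $v=u^p$ in $p_\delta=c_p\int_\delta^\infty e^{-u^p}du$ and then either retaining the leading term in one step of integration by parts or bounding $\int_{\delta^p}^{2\delta^p}v^{1/p-1}e^{-v}dv$ directly from below. Once this is in hand, the condition $np_\delta\ge 2(m+1)$ becomes, after taking logarithms, essentially $(1-c^p)\log n-\log(m+1)$ up to a $p$-dependent lower-order term, and the assumption $m+1\le \varepsilon_p n$ together with a sufficiently small choice of the pair $(c,\varepsilon_p)$ provides the necessary slack uniformly over the claimed range. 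Simultaneously ensuring $\delta\ge \delta_0(p)$ in the low-$m$ regime is the delicate calibration step, but it only constrains $\varepsilon_p$ by a $p$-dependent constant.
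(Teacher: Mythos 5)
Your part (i) and the reduction via Lemma \ref{lem1} and Lemma \ref{lem2} coincide with the paper's argument. For part (ii) you take a genuinely different route: a direct threshold argument, bounding $\E\,\omega^*_{m+1}\ge\delta\,\P(N_\delta\ge m+1)$ with $N_\delta$ binomial and a Chebyshev (second-moment) bound, whereas the paper first proves the case $m=0$ via the Bonferroni inequality $\P(\omega_1^*>t_0)\ge n\P(\omega_1>t_0)-\binom{n}{2}\P(\omega_1>t_0)^2$ with $\P(\omega_1>t_0)=1/n$, and then reaches general $m\le\varepsilon_p n$ by partitioning $\{1,\dots,n\}$ into $m$ blocks and an averaging argument for $\|x\|_{(m)}$ (using the auxiliary inequality \eqref{eq:ref} and a careful choice of $N_p,\varepsilon_p$). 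Your scheme avoids that block/averaging machinery at the price of calibrating $\delta$ and $\varepsilon_p$; the calibration you sketch (the lower tail bound $p_\delta\gtrsim_p\delta^{1-p}e^{-\delta^p}$, and $np_\delta\ge 2(m+1)$ uniformly for $m+1\le\varepsilon_p n$ since the left side decays only like a fixed power $c^p<1$ of $(m+1)/n$) is sound.

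The genuine gap is your treatment of the boundary cases $m\in\{0,1,2\}$, which your Chebyshev step really does exclude (the bound $2/(m+1)\le\tfrac12$ fails there). Your claim that ``$\log(en/(m+1))$ is bounded there'' is false: for fixed small $m$ it is of order $\log n$, and the crude estimate $\E\,\omega_1^*\ge\E\,\omega_1>0$ only yields $\sigma_m^{p,\infty}(\mu_p)\gtrsim n^{-1/p}$, missing exactly the logarithmic factor asserted in \eqref{thm6:2} -- and the case $m=0$ is precisely the emblematic one (compare \eqref{eq:cal2}). The repair is easy inside your framework: for $m=0$ use $\P(N_\delta\ge 1)=1-(1-p_\delta)^n\ge 1-e^{-np_\delta}\ge 1-e^{-2}$ once $np_\delta\ge 2$, and for $m=1,2$ calibrate $c$ so that $np_\delta\ge C(m+1)$ with a larger constant $C$ (the same Chebyshev bound then gives $\P(N_\delta<m+1)\le 4/(np_\delta)\le\tfrac12$), or simply handle $m=0$ as the paper does and note that the cases $m=1,2$ follow from monotonicity of $\E\,\omega^*_{m+1}$ in $m$ at the cost of a constant. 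As written, however, the low-$m$ regime of \eqref{thm6:2} is not proved. (A minor point: the case $p=\infty$ in (i) should be noted separately, as Lemmas \ref{lem1} and \ref{lem2} are stated for $p<\infty$; the paper dismisses it as trivial.)
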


\begin{proof}
Lemma \ref{lem1} and Lemma \ref{lem2} imply immediately the first part of the theorem if $p<\infty$.
If $p=\infty$, the proof is trivial.

The proof of the second part is divided into two steps.

\emph{Step 1.} We start first with the case $m=0$.
 
If $p=\infty$, then $x_1^*=1$ for all $x\in\D$ and the proof is trivial. Let us therefore assume, that $p<\infty.$
According to Lemma \ref{lem1}, we have to estimate ${\mathbb E}\, x_1^*$ from below.
This was done in \cite[Lemma 2]{SZ}. We include a slightly different proof for readers convenience.
For every $t_0>0$, it holds
\begin{align*}
{\mathbb E}\, x_1^*
\ge t_0\,{\mathbb P}(x_1^*>t_0)=t_0\, {\mathbb P}(\max_{1\le j\le n} x_j>t_0)
\ge t_0[n{\mathbb P}(x_1>t_0)-\binom{n}{2}{\mathbb P}(x_1>t_0)^2].
\end{align*}
We define $t_0$ by ${\mathbb P}(x_1>t_0)=\frac{1}{n}$ and obtain ${\mathbb E}\, x_1^*\ge t_0/2.$

From the simple estimate
$$
\frac{c_p}{p}\int_{T^p}^{\infty} u^{1/p-1}e^{-u}du
\ge C_p e^{-2T^p},\qquad T>1,
$$
it follows, that there is a positive real number $\gamma_p>0$, such that
$$
{\mathbb P}(x_1>\gamma_p(\log (en))^{1/p})\ge 1/n.
$$
This gives $t_0\ge \gamma_p(\log (en))^{1/p}$ and ${\mathbb E}\, x_1^*\ge C_p (\log (en))^{1/p}.$

\emph{Step 2.} Let $0\le m\le \varepsilon_p n$, where $\varepsilon_p>0$ will be chosen later on.

We shall use the inequality 
\begin{equation}\label{eq:ref}
\frac{1}{m}\sum_{j=1}^m \log^{1/p}\Bigl(\frac{en}{j}\Bigr)\le C_p \log^{1/p}\Bigl(\frac{en}{m}\Bigr),\quad 1\le m \le n,
\end{equation}
which follows by direct calculation for $p=1$, by H\"older's inequality for $1<p<\infty$ and by
replacing the sum by the corresponding integral and integration by parts if $0<p<1.$

We denote 
$$
\|x\|_{(m)}=\frac{1}{m}\sum_{j=1}^m x_j^*.
$$
By Lemma \ref{lem2} and \eqref{eq:ref},
\begin{equation}\label{eq:thm6:1}
\E\, \|x\|_{(m)}=\frac{1}{m}\sum_{j=1}^m \E\, x_j^*\le \frac{C_p}{m}\sum_{j=1}^m \log^{1/p}\Bigl(\frac{en}{j}\Bigr)
\le C^1_p \log^{1/p}\Bigl(\frac{en}{m}\Bigr).
\end{equation}
To estimate $\E\,\|x\|_{(m)}$ from below, we assume that $1\le m\le n$ and that $n/m$ is an integer (otherwise one has to slightly modify
the argument at the cost of the constants involved). We partition the set $\{1,\dots,n\}=A_1\cup\dots\cup A_m$,
where each one of the disjoint sets $A_j$ has $n/m$ elements. Then we have
$$
\|x\|_{(m)}\ge \frac{1}{m}\sum_{j=1}^m\max_{l\in A_j}x_l
$$
and by the first step we obtain
\begin{equation}\label{eq:thm6:2}
\E\,\|x\|_{(m)}\ge \frac{1}{m}\sum_{j=1}^m\E\,\max_{l\in A_j} x_l\ge C^2_p\log^{1/p}\Bigl(\frac{en}{m}\Bigr).
\end{equation}
Let $N_p<1/\varepsilon_p$ be a natural number to be chosen later on. Combining \eqref{eq:thm6:1} with \eqref{eq:thm6:2} gives finally
\begin{align*}
\E\,x_m^*&\ge \frac{1}{N_pm}\sum_{k=m}^{N_pm}\E\, x_k^*\ge \E\,\|x\|_{(N_pm)}-\frac{1}{N_p}\E\,\|x\|_{(m)}\\
&\ge C^2_p\log^{1/p}\Bigl(\frac{en}{N_pm}\Bigr)-\frac{C^1_p}{N_p}\log^{1/p}\Bigl(\frac{en}{m}\Bigr)\\
&=\log^{1/p}\Bigl(\frac{en}{m}\Bigr)\left\{C^2_p\left[1-\frac{\log(N_p)}{\log\Bigl(\frac{en}{m}\Bigr)}\right]^{1/p}-\frac{C^1_p}{N_p}\right\}.
\end{align*}
An appropriate choice of $N_p$ and $\varepsilon_p$ (i.e. $N_p>2^{1/p}C_p^1/C_p^2$ and $\varepsilon_p<\min(1/N_p,e/N_p^2)$) with
$$
C^2_p\left[1-\frac{\log(N_p)}{\log\Bigl(\frac{e}{\varepsilon_p}\Bigr)}\right]^{1/p}-\frac{C^1_p}{N_p}>0
$$
gives the result.
\end{proof}
\begin{rem}
\begin{itemize}
\item[(i)] Theorem \ref{thm6} provides basic estimates of average best $m$-term widths $\sigma_m^{p,\infty}(\mu_p)$.
In the case $m=0$ a stronger result on concentration of $\mu_p$ was obtained already in \cite[Theorem 3 and Remark 2]{SZ}.
It would be certainly of interest to obtain a similar statement also for other values of $m>0$,
but this would go beyond the scope of this paper and we leave this direction open for further study.
\item[(ii)] Theorem \ref{thm6} may be interpreted in the sense of the discussion after formula \eqref{eq:cal2}.
Namely, the average coordinate of $x\in\D$ is $n^{-1/p}$. Theorem \ref{thm6} shows, that the average value
of the largest coordinate is only slightly larger (namely $c[\ln(en)]^{1/p}$ times larger). In this sense, the average point
of $\D$ is only slightly modified (and properly normalized) white noise.
\item[(iii)]
Using the interpolation formula \eqref{eq:6}, 
one may immediately extend this result to all $0<p\le q<\infty$. But we shall see later on,
that in the case $q<\infty$, one may prove slightly better estimates.
\item[(iv)]
The behavior of $\sigma_{m}^{p,\infty}(\mu_p)$ was studied in detail in \cite[Example 10]{GLSW} for $p=2$.
It was shown that if $x_i$ are independent $N(0,1)$ Gaussian random variables and $m\le n/2+1$, then
\begin{align*}
c\sqrt{\ln \frac{2n}{m}}\le \E\, x_m^*\le C\sqrt{\ln \frac{2n}{m}},
\end{align*}
where $c$ and $C$ are absolute positive constants. Furthermore, if $m\ge n/2+1$, then
\begin{align*}
\sqrt\frac{\pi}{2}\ \frac{n-m+1}{n+1}\le \E\, x_m^*\le \sqrt{2\pi}\ \frac{n-m+1}{n}.
\end{align*}
\item[(v)] The method used in the proof of the second part of Theorem \ref{thm6}
may be found for example in \cite{G}.
\end{itemize}
\end{rem}

\subsection{The case $q<\infty$}
We discuss briefly also the case when $q<\infty$. It turns out, that in this case
the logarithmic term disappears. We do not go much into details and restrict ourselves to the case $m=0.$

\begin{prop}\label{prop12}
Let $n\ge 2$ and $0<p\le q<\infty$. 
Then
\begin{enumerate}
\item[(i)] $C_{p,q}^1n^{1/q}\le {\mathbb E}\, \|x\|_q \le C_{p,q}^2 n^{1/q}$,
\item[(ii)] 
$$
C_{p,q}^1\cdot \frac{{\mathbb E\,}\|x\|_q}{n^{1/p}}\le \sigma_0^{p,q}(\mu_p)=\int_{\D}\|x\|_q d\mu_p(x)
\le C_{p,q}^2 \cdot \frac{{\mathbb E\,}\|x\|_q}{n^{1/p}}
$$
and
\item[(iii)] $C_{p,q}^1 n^{1/q-1/p}\le \sigma_0^{p,q}(\mu_p)\le C_{p,q}^2n^{1/q-1/p}$,
\end{enumerate}
where in all these estimates $C_p^1$ and $C_p^2$ are positive real numbers depending only on $p$.

\end{prop}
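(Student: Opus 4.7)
The plan is to obtain (ii) first by a computation that is essentially a repeat of the proof of Lemma \ref{lem1} with $x_m^*$ replaced by $\|x\|_q$, then to establish (i) by separately treating the upper and lower bound on $\mathbb{E}\|x\|_q$, and finally to derive (iii) by multiplying the conclusions of (i) and (ii).

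For (ii), I would plug the function $f(x)=\|x\|_q\,e^{-x_1^p-\cdots-x_n^p}$ into the polar decomposition identity \eqref{eq:4}. Since $\|rx\|_q=r\|x\|_q$ and $\sum_i(rx_i)^p=r^p$ on $\Delta_p^n$, the right-hand side separates into $n\int_0^\infty r^n e^{-r^p}\,dr\cdot \sigma_0^{p,q}(\mu_p)$. On the left-hand side, the joint density of $(\omega_1,\dots,\omega_n)$ being $c_p^n e^{-\|x\|_p^p}$ identifies $\int_{\R_+^n}\|x\|_q e^{-\|x\|_p^p}d\lambda(x)=c_p^{-n}\mathbb{E}\|x\|_q$; combined with Dirichlet's formula $\lambda([0,1]\cdot\Delta_p^n)=\Gamma(1/p+1)^n/\Gamma(n/p+1)$ and the substitution $u=r^p$ in $\int_0^\infty r^n e^{-r^p}dr=\Gamma(n/p+1/p)/p$, the relation
$$
\sigma_0^{p,q}(\mu_p)=\frac{\Gamma(n/p)}{\Gamma(n/p+1/p)}\,\mathbb{E}\|x\|_q
$$
emerges just as in Lemma \ref{lem1}. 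The Stirling estimate \eqref{eq:Stirl}, exactly as in that lemma, gives the two-sided bound $\sigma_0^{p,q}(\mu_p)\asymp n^{-1/p}\mathbb{E}\|x\|_q$, proving (ii).

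For (i), write $S_n:=\|x\|_q^q=\sum_{j=1}^n\omega_j^q$, which is a sum of i.i.d.\ nonnegative variables with $\alpha:=\mathbb{E}\omega_1^q=c_p\Gamma((q+1)/p)/p<\infty$ and $\beta:=\mathbb{E}\omega_1^{2q}<\infty$. For the upper bound on $\mathbb{E}\|x\|_q$, when $q\ge 1$ Jensen applied to the convex function $t\mapsto t^q$ yields $\mathbb{E}\|x\|_q\le(\mathbb{E} S_n)^{1/q}=(n\alpha)^{1/q}$; when $0<q<1$, the elementary finite-dimensional norm comparison $\|x\|_q\le n^{1/q-1}\|x\|_1$ combined with $\mathbb{E}\|x\|_1=n\,\mathbb{E}\omega_1$ gives the same order $n^{1/q}$. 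For the lower bound, I would use Paley–Zygmund: since $\mathbb{E} S_n^2=n\beta+n(n-1)\alpha^2$, the ratio $(\mathbb{E} S_n)^2/\mathbb{E} S_n^2=n\alpha^2/(\beta+(n-1)\alpha^2)$ is monotone increasing in $n$, so bounded below by the $n=2$ value $2\alpha^2/(\beta+\alpha^2)=:c_{p,q}>0$ for all $n\ge 2$. Hence $\mathbb{P}(S_n\ge n\alpha/2)\ge c_{p,q}/4$, and on this event $\|x\|_q\ge(n\alpha/2)^{1/q}$, producing $\mathbb{E}\|x\|_q\ge C_{p,q}^1 n^{1/q}$. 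Statement (iii) is now the immediate product of (i) and (ii).

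There is no real obstacle; the only place where one must be careful is the upper half of (i) in the regime $0<q<1$, where Jensen points the wrong way, but the finite-dimensional $\ell_q$--$\ell_1$ comparison handles this cleanly. Everything else is a direct transcription of the Gamma-function / Stirling arithmetic already carried out in Lemma \ref{lem1} together with a standard second-moment argument.
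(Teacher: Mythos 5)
Your proof is correct, and for parts (ii) and (iii) it follows the paper exactly: the paper's proof of (ii) is precisely the Lemma \ref{lem1} computation with $x_m^*$ replaced by the $1$-homogeneous function $\|x\|_q$ (the paper explicitly leaves it to the reader), giving $\sigma_0^{p,q}(\mu_p)=\frac{\Gamma(n/p)}{\Gamma(n/p+1/p)}\,\E\,\|x\|_q$ and then Stirling via \eqref{eq:Stirl}, and (iii) is the product of (i) and (ii). Where you genuinely diverge is part (i). The paper handles both bounds there with one-line convexity/concavity facts: for $q\ge1$ it uses $\bigl(\sum_{j}(\E x_j)^q\bigr)^{1/q}\le\E\|x\|_q\le\bigl(\sum_{j}\E x_j^q\bigr)^{1/q}$ (vector-valued triangle inequality plus Jensen for the concave $t\mapsto t^{1/q}$), and for $q\le1$ both inequalities reverse (concavity of $\|\cdot\|_q$ on $\R_+^n$ and Jensen for the now convex $t^{1/q}$); since $\E\,\omega_1$ and $(\E\,\omega_1^q)^{1/q}$ depend only on $p$ and $q$, both sides are of order $n^{1/q}$. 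Your upper bound essentially matches this (Jensen for $q\ge1$; for $q<1$ your $\ell_q$--$\ell_1$ comparison yields the same $n^{1/q}\,\E\,\omega_1$ as the paper's reverse-Minkowski step), but your lower bound via Paley--Zygmund applied to $S_n=\sum_j\omega_j^q$ is a different, second-moment argument. It is sound: $\alpha^2\le\beta$ by Cauchy--Schwarz makes $n\alpha^2/(\beta+(n-1)\alpha^2)$ increasing in $n$, so $\P(S_n\ge n\alpha/2)$ is bounded below uniformly in $n$ and hence $\E\|x\|_q\ge C_{p,q}^1 n^{1/q}$. What it buys is a single argument valid for all $q$ simultaneously, using only finiteness of the first two moments of $\omega_1^q$ (so it would transfer to other generating densities); what it costs is that it is heavier than necessary here, since for $q\le1$ the lower bound is just Jensen, $\E S_n^{1/q}\ge(\E S_n)^{1/q}$, and for $q\ge1$ just the triangle inequality for integrals.
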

\begin{proof} (i) The following two inequalities may be easily proved by H\"older's and Minkowski
inequality.
\begin{align*}
\biggl(\sum_{j=1}^n ({\mathbb E}x_j)^q\biggr)^{1/q}&\le
{\mathbb E}\bigl(\sum_{j=1}^n x_j^q\bigr)^{1/q}\le \bigl(\sum_{j=1}^n{\mathbb E}x_j^q\bigr)^{1/q},\qquad q\ge 1,\\
\bigl(\sum_{j=1}^n{\mathbb E}x_j^q\bigr)^{1/q}&\le
{\mathbb E}\bigl(\sum_{j=1}^n x_j^q\bigr)^{1/q}\le
\biggl(\sum_{j=1}^n ({\mathbb E}x_j)^q\biggr)^{1/q},\qquad q\le 1.
\end{align*}
This gives for $q\ge 1$
$$
{\mathbb E}\|x\|_q \le n^{1/q} ({\mathbb E}x_j^q)^{1/q} \quad\text{and}\quad
{\mathbb E}\|x\|_q \ge n^{1/q} {\mathbb E}x_j
$$
and for $q\le 1$
$$
{\mathbb E}\|x\|_q \le n^{1/q} {\mathbb E}x_j \quad\text{and}\quad
{\mathbb E}\|x\|_q \ge n^{1/q} ({\mathbb E}x^q_j)^{1/q}.
$$
Let us note, that the value of ${\mathbb E}x_j$ and $({\mathbb E}x^q_j)^{1/q}$ does not depend on $n$, only on $p$ and $q$.

(ii) The proof of the second part resembles very much the proof of Lemma \ref{lem1} and is left to the reader.

(iii) The last point follows immediately from (i) and (ii).
\end{proof}
\begin{rem}
A similar statement to Proposition \ref{prop12} is included in \cite[Lemma 2, point 4]{SZ}.
\end{rem}

\section{Normalized surface measure}

In this section we study the average best $m$-term widths for another classical measure on $\D$,
namely the normalized Hausdorff measure, cf. Definition \ref{dfn3}. Intuitively, this measure
gives more weight to those areas, where one or more components of $x\in\D$ are close to zero.
It turns out, that this is really the case - with the mathematical formulation given in Lemma \ref{lem7} below.
This relation is then used together with Lemma \ref{lem8} in Theorem \ref{thm9} to provide estimates of 
$\sigma_{0}^{p,\infty}(\varrho_p)$ from above.

\begin{dfn}\label{dfn3}
Let $n\ge 2$ be a natural number. We denote by
$$
\varrho_p({\mathcal A})=
\frac{{\mathcal H}({\mathcal A})}{{\mathcal H}(\D)},\qquad {\mathcal A}\subset \D
$$
the normalized $n-1$ dimensional Hausdorff measure on $\D$.
\end{dfn}
Let us mention, that for $p\in\{1,2,\infty\}$ the measure $\varrho_p$ coincides with $\mu_p.$
The following lemma provides a relationship between the normalized surface measure $\varrho_p$
and the cone measure $\mu_p$. For $p\ge 1$, it was given by \cite{NR}. We follow closely their approach
and it turns out, that it may be generalized also to the non-convex case of $0<p<1.$

\begin{lem}\label{lem7}
Let $0<p<\infty$ and $n\ge 2.$ Then $\varrho_p$ is an absolutely continuous measure with respect
to $\mu_p$ and for $\mu_p$ almost every $x\in\D$ it holds
$$
\frac{d\varrho_p} {d\mu_p}(x)=\frac{n\lambda([0,1]\cdot\D)}{{\mathcal H}(\D)}
\Bigl\|\nabla (\|\cdot\|_p)(x)\Bigr\|_2 = c^{-1}_{p,n} \biggl(\sum_{i=1}^{n}x_i^{2p-2}\biggr)^{1/2},
$$
where
$$
c_{p,n}=\int_{\D}\biggl(\sum_{i=1}^n x_i^{2p-2}\biggr)^{1/2}d \mu_p(x)
$$
is the normalizing constant.
\end{lem}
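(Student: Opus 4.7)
The strategy will be to realise both $\mu_p$ and $\varrho_p$ as absolutely continuous measures with respect to the common reference $\mathcal{H}|_{\D}$, and then read off the Radon--Nikodym derivative $d\varrho_p/d\mu_p$ as the quotient of their densities. The tool producing these densities is the coarea formula, applied to $g(x):=\|x\|_p$; here $g$ plays exactly the role that polar coordinates played in \eqref{eq:cal1} for $p=2$.

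First I would check that, although $\|\cdot\|_p$ is merely a quasi-norm when $p<1$, the function $g$ is $C^\infty$ on the open orthant $(0,\infty)^n$ for every $p>0$, with $\partial_i g(x)=g(x)^{1-p}x_i^{p-1}$. In particular $\nabla g$ is $0$-homogeneous, and on $\D\cap(0,\infty)^n$ we have $\|\nabla g(x)\|_2=(\sum_{i=1}^n x_i^{2p-2})^{1/2}$. Then I would apply the coarea formula to the test function $h(y)=\mathbf{1}_{[0,1]\cdot\mathcal{A}}(y)\cdot\|\nabla g(y)\|_2^{-1}$ for an arbitrary Borel set $\mathcal{A}\subset\D$, obtaining
$$
\lambda([0,1]\cdot\mathcal{A})=\int_0^1\int_{g^{-1}(r)\cap[0,1]\cdot\mathcal{A}}\|\nabla g(y)\|_2^{-1}\,d\mathcal{H}(y)\,dr.
$$
Because $g^{-1}(r)\cap[0,1]\cdot\mathcal{A}=r\mathcal{A}$ for $r\in(0,1)$, because $d\mathcal{H}|_{r\mathcal{A}}=r^{n-1}\,d\mathcal{H}|_{\mathcal{A}}$, and because of $0$-homogeneity of $\nabla g$, the inner integral equals $r^{n-1}\int_{\mathcal{A}}\|\nabla g\|_2^{-1}\,d\mathcal{H}$, and integration in $r$ collapses this to
$$
\lambda([0,1]\cdot\mathcal{A})=\frac{1}{n}\int_{\mathcal{A}}\|\nabla g(x)\|_2^{-1}\,d\mathcal{H}(x).
$$

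Specialising to $\mathcal{A}=\D$ evaluates $\lambda([0,1]\cdot\D)$, and taking the quotient exhibits $\mu_p$ as a normalised multiple of $\|\nabla g\|_2^{-1}\,d\mathcal{H}$. Comparing with $\varrho_p=d\mathcal{H}/\mathcal{H}(\D)$ yields the first form of $d\varrho_p/d\mu_p$ asserted in the lemma; the second form follows from the explicit expression for $\|\nabla g\|_2$, and the representation of $c_{p,n}$ is forced by the probability-measure condition $\int_{\D}d\varrho_p=1$.

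The main obstacle is the non-convex regime $0<p<1$, where $g$ is not Lipschitz on $\R_+^n$ (indeed $\|\nabla g(x)\|_2\to\infty$ whenever a coordinate of $x$ vanishes); this is precisely why Naor and Romik restricted themselves to $p\ge 1$. I would bypass the difficulty by carrying out the coarea argument only on the open orthant $(0,\infty)^n$, where $g$ is smooth and hence locally Lipschitz. No mass is lost: $\partial\R_+^n$ is Lebesgue-null in $\R^n$, while $\D\cap\partial\R_+^n$ is a finite union of $(n-2)$-dimensional $\ell_p$-simplices and therefore $\mathcal{H}$-null in $\D$, so both sides of the displayed identity are unaffected by excising the boundary, and the resulting density formula is valid $\mathcal{H}$-a.e.\ on $\D$, hence also $\mu_p$-a.e.
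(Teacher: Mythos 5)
Your argument is correct, but it is genuinely different from the one in the paper. The paper follows Naor--Romik and differentiates the two measures locally: at a point $x\in\D$ where $\|\cdot\|_p$ is differentiable and which is a density point of $\mathcal H$, it sandwiches the cone $[0,1]\cdot\bigl(B(x,\varepsilon)\cap\D\bigr)$ between cones over caps of the tangent hyperplane $H=x+z^\perp$, $z=\nabla(\|\cdot\|_p)(x)$, and computes the ratio $\varrho_p(B(x,\varepsilon)\cap\D)/\mu_p(B(x,\varepsilon)\cap\D)$ as $\varepsilon\to0$ using ${\rm vol}([0,1]\cdot(B(x,a)\cap H))=a^{n-1}V_{n-1}/(n\|z\|_2)$; the whole point of the extension to $0<p<1$ is that $[1,\infty)\cdot\D$ is then convex, which yields $\langle z,y\rangle\ge1$ for $y\in\D$ and makes the second inclusion work. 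You instead obtain the identity $\lambda([0,1]\cdot{\mathcal A})=\frac1n\int_{\mathcal A}\|\nabla(\|\cdot\|_p)\|_2^{-1}\,d\mathcal H$ for all Borel ${\mathcal A}\subset\D$ in one stroke from the coarea formula applied to $g=\|\cdot\|_p$ on the open orthant, using the $0$-homogeneity of $\nabla g$ and the scaling of $\mathcal H^{n-1}$, and you handle the failure of Lipschitz continuity for $p<1$ by excising $\partial\R_+^n$, which is both $\lambda$-null and (intersected with $\D$) $\mathcal H^{n-1}$-null, so no mass is lost for either measure. This buys a shorter, global proof that identifies the density for every Borel set without any appeal to density points or differentiation of measures, and it makes the mutual absolute continuity transparent since the density $\|\nabla g\|_2^{-1}$ is strictly positive on $\D\cap(0,\infty)^n$; its only cost is importing the coarea formula for locally Lipschitz functions (which deserves an explicit citation, e.g.\ Federer or Evans--Gariepy, stated for open sets by exhaustion), whereas the paper's route is elementary and self-contained, in line with its stated preference for direct techniques, and makes the geometric role of the tangent hyperplane and of the convexity of $[1,\infty)\cdot\D$ visible. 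You should also record explicitly, as you implicitly do, that $\|\nabla g\|_2^{-1}$ is $\mathcal H$-integrable on $\D$ (it is in fact bounded there for every $p$), so the normalization $c_{p,n}=n\lambda([0,1]\cdot\D)/\mathcal H(\D)$ indeed follows from $\int_{\D}d\varrho_p=1$.
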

\begin{proof}
The proof imitates the proof of \cite[Lemma 1 and Lemma 2]{NR}, where the statement was proven for $1\le p<\infty$.
Hence, we may assume, that $0<p<1.$ First, we introduce some notation.

We fix $x=(x_1,\dots,x_n)\in \D$, such that
\begin{itemize}
\item the mapping $y\to \|y\|_p$ is differentiable at $x$,
\item $x$ is a density point of $\mathcal H$, i.e.
\begin{equation}\label{eq:pr7:0}
\lim_{\varepsilon\to 0+}\frac{{\mathcal H}(B(x,\varepsilon)\cap\D)}{\varepsilon^{n-1}V_{n-1}}=1,
\end{equation}
where $V_{n-1}$ denotes the Lebesgue volume of the $n-1$ dimensional Euclidean unit ball.
\item $x_i>0$ for all $i=1,\dots,n$.
\end{itemize}
Obviously, $\varrho_p$-almost every $x\in \D$ satisfies all the three properties 
(we refer for example to \cite[Theorem 16.2]{Mat} for the second one).

Furthermore, we put $z:=\nabla(\|\cdot\|_p)(x)$. This means, that
\begin{equation}\label{eq:pr7:1}
\|x+y\|_p=1+\langle z,y \rangle+r(y),
\end{equation}
where
$$
\theta(\delta):=\sup\left\{\frac{|r(y)|}{\|y\|_2}:0<\|y\|_2\le \delta\right\},\quad \delta>0
$$
tends to zero if $\delta$ tends to zero.
Using \eqref{eq:pr7:1} for $y=\delta x$, one observes, that $\langle z,x\rangle=1$.
We denote by $H=x+z^{\perp}$ the tangent hyperplane to $\D$ at $x$.
Let us note, that for $0<p<1$ the set $\R_+^n\setminus [0,1)\cdot \D=[1,\infty)\cdot\D$ is convex.
Next, we show, that $\langle z,y\rangle\ge 1$ for every $y\in [1,\infty)\cdot \D$. Indeed,
\begin{align*}
1&\le \|x+\lambda(y-x)\|_p=1+\langle z,\lambda(y-x)\rangle +r(\lambda(y-x))\\
&=1-\lambda+\lambda\langle z,y\rangle+r(\lambda(y-x))
\end{align*}
Dividing by $\lambda>0$ and letting $\lambda\to 0$ gives the statement.

The proof of the lemma is based on the following two inclusions, namely
\begin{equation}\label{eq:pr7:2}
[0,1] \cdot \Bigl(B(x,\varepsilon(1-\theta(\varepsilon)))\cap H\Bigr)
\subset [0,1]\cdot \Bigl(B(x,\varepsilon)\cap\D\Bigr)
\end{equation}
and
\begin{equation}\label{eq:pr7:2'}
[0,1]\cdot \Bigl(B(x,\varepsilon)\cap\D\Bigr)\subset [0,1+\varepsilon\theta(\varepsilon)]\cdot \Bigl( B(x,\varepsilon(1+\theta(\varepsilon)\|x\|_2))\cap H\Bigr),
\end{equation}
which hold for all $\varepsilon>0$ small enough.

First, we prove \eqref{eq:pr7:2}. To given $0\le s\le 1$ and
$v\in B(x,\varepsilon(1-\theta(\varepsilon))\cap H$ we need to find $0\le t\le 1$ and 
$w\in B(x,\varepsilon)\cap\D$, such that $sv=tw.$
To do this, we set 
$$
w:=\frac{v}{\|v\|_p}\in\D\quad\text{and}\quad t:=s\|v\|_p.
$$
We need to show, that $t\le 1$ and $\|x-w\|_2\le \varepsilon$.

We choose $0<\varepsilon\le \min_i x_i$. Then
$$
x_i\le |x_i-v_i|+v_i\le \|x-v\|_2+v_i\le \varepsilon+v_i
$$
for every $i=1,\dots,n$, which implies, that $v_i\ge 0$ and $v\in\R_+^n$. 
From $v\in H$ and $v\in\R_+^n$ we deduce, that $\|v\|_p\le 1$. Hence $t=s\|v\|_p\le \|v\|_p\le 1$.

Next, we write
\begin{align*}
\|x-w\|_2&=\Bigl\|x-\frac{v}{\|v\|_p} \Bigr\|_2\le \|x-v\|_2+\Bigl\|v-\frac{v}{\|v\|_p}\Bigr\|_2\\
&\le \varepsilon(1-\theta(\varepsilon))+\|v\|_2\cdot\frac{1-\|v\|_p}{\|v\|_p}\le \varepsilon(1-\theta(\varepsilon))+1-\|v\|_p\\
&=\varepsilon(1-\theta(\varepsilon))+1-\left\{1+\langle v-x,z\rangle+r(v-x)\right\}\\
&=\varepsilon(1-\theta(\varepsilon))+r(v-x)\le \varepsilon.
\end{align*}

Next, we prove \eqref{eq:pr7:2'}. 
We need to find to given
$0\le t\le 1$ and $w\in B(x,\varepsilon)\cap\D$ some $0\le s\le 1 +\varepsilon\theta(\varepsilon)$
and $v\in B(x,\varepsilon(1+\theta(\varepsilon)\|x\|_2))\cap H$, such that $tw=sv.$
We put
$$
s:=t\langle w,z\rangle\quad\text{and}\quad v:=\frac{w}{\langle w,z\rangle}.
$$
Let us recall, that we have shown above, that $w\in\D$ implies that $\langle w,z\rangle \ge 1.$

Of course, $tw=sv$ and $v\in H$ (as $\langle v,z\rangle =1$). Hence, it remains to show, that
$s\le 1+\varepsilon \theta(\varepsilon)$ and $\|v-x\|_2\le \varepsilon(1+\theta(\varepsilon)\|x\|_2)$.

The application of \eqref{eq:pr7:1} gives
$$
1=\|w\|_p=\|x+(w-x)\|_p=1+\langle w-x,z\rangle + r(w-x),
$$
which again forces $\langle w,z\rangle\le 1+\varepsilon \theta(\varepsilon).$ 
Then $s=t\langle w,z\rangle \le \langle w,z\rangle \le 1+\varepsilon\theta(\varepsilon).$

Finally, we write
\begin{align*}
\|v-x\|_2&=\Bigl|\Bigl| \frac{w}{\langle w,z\rangle} - x\Bigr\|_2
\le \Bigl\|\frac{w}{\langle w,z\rangle}-\frac{x}{\langle w,z\rangle} \Bigr\|_2+
\Bigl\|\frac{x}{\langle w,z\rangle} -x\Bigr\|_2\\
&\le \frac{\|w-x\|_2}{\langle w,z\rangle}+\|x\|_2\frac{\langle w,z\rangle-1}{\langle w,z\rangle}
\le \varepsilon+\varepsilon\theta(\varepsilon)\|x\|_2.
\end{align*}
Equipped with \eqref{eq:pr7:2} and \eqref{eq:pr7:2'}, we may finish the proof of the lemma. We write
\begin{align}\notag
\lim_{\varepsilon\to 0}\frac{\varrho_p(B(x,\varepsilon)\cap\D)}{\mu_p(B(x,\varepsilon)\cap\D)}&=
\lim_{\varepsilon\to 0}\frac{{\mathcal H}(B(x,\varepsilon)\cap\D)}{{\mathcal H}(\D)}
\cdot\frac{\varepsilon^{n-1}V_{n-1}}{\varepsilon^{n-1}V_{n-1}}\cdot
\frac{\lambda([0,1]\cdot\D)}{\lambda([0,1]\cdot[B(x,\varepsilon)\cap\D])}\\
&\label{eq:pr7:3}=\frac{\lambda([0,1]\cdot\D)}{{\mathcal H}(\D)}\cdot
\lim_{\varepsilon\to 0}\frac{\varepsilon^{n-1}V_{n-1}}{\lambda([0,1]\cdot [B(x,\varepsilon)\cap\D])},
\end{align}
where we have used \eqref{eq:pr7:0}. As the perpendicular distance between zero and $H$ is equal to
$1/\|z\|_2$, we observe, that
$$
{\rm vol}(B(x,a)\cap H)=\frac{a^{n-1}V_{n-1}}{n\|z\|_2}
$$
holds for every $a>0.$ Using this, we get from \eqref{eq:pr7:2} and \eqref{eq:pr7:2'}
\begin{align*}
\lambda&\left( [0,1]\cdot \Bigl(B(x,\varepsilon(1-\theta(\varepsilon)))\cap H\Bigr)\right)=
\frac{[\varepsilon(1-\theta(\varepsilon))]^{n-1}V_{n-1}}{n\|z\|_2}\\
&\qquad \le \lambda \left([0,1]\cdot \Bigl(B(x,\varepsilon)\cap\D\Bigr)\right)\\
&\qquad\le \lambda \left([0,1+\varepsilon\theta(\varepsilon)]\cdot\Bigl( B(x,\varepsilon(1+\theta(\varepsilon)\|x\|_2))\cap H\Bigr)\right)\\
&\qquad=[1+\varepsilon\theta(\varepsilon)]^n\cdot \frac{[\varepsilon(1+\theta(\varepsilon)\|x\|_2)]^{n-1}V_{n-1}}{n\|z\|_2}.
\end{align*}
Combining these estimates with \eqref{eq:pr7:3} gives the result.
\end{proof}

Following lemma is analogous to Lemma \ref{lem1} and reduces the calculation of $\sigma_0^{p,\infty}(\varrho_p)$
to inequalities for the estimated values of functions of the random variables $x_1,\dots,x_n$.

\begin{lem}\label{lem8} 
Let $0<p<\infty$. There exists two positive real numbers $C_p^1$ and $C_p^2$, such that  
\begin{align}\label{eq:8}
C_p^1\cdot& \frac{\displaystyle{\mathbb E\,} x_1^*\Bigl(\sum_{i=1}^n x_i^{2p-2}\Bigr)^{1/2}}
{\displaystyle{\mathbb E}\Bigl(\sum_{i=1}^n x_i^{2p-2}\Bigr)^{1/2}}\cdot n^{-1/p}\le 
\sigma^{p,\infty}_{0}(\varrho_p)
=\int_{\D}x_1^* d\varrho_p\\&=
\frac{\displaystyle \int_{\D}x^*_1\Bigl(\sum_{i=1}^n x_i^{2p-2}\Bigr)^{1/2}d\mu_p(x)}
{\displaystyle \int_{\D}\Bigl(\sum_{i=1}^n x_i^{2p-2}\Bigr)^{1/2}d\mu_p(x)}
\notag \le C_p^2 \frac{\displaystyle{\mathbb E\,} x_1^*\Bigl(\sum_{i=1}^n x_i^{2p-2}\Bigr)^{1/2}}
{\displaystyle{\mathbb E}\Bigl(\sum_{i=1}^n x_i^{2p-2}\Bigr)^{1/2}}\cdot n^{-1/p}
\end{align}
\end{lem}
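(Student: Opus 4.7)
The identity $\sigma_0^{p,\infty}(\varrho_p) = \int_\D x_1^* \, d\varrho_p$ is immediate from Definition \ref{dfn1} with $m=0$, $q=\infty$ (using $\sigma_0(x)_\infty = \sup_j x_j = x_1^*$). The middle equality in \eqref{eq:8} is just Lemma \ref{lem7}: inserting the density $d\varrho_p/d\mu_p = c_{p,n}^{-1}(\sum_i x_i^{2p-2})^{1/2}$ turns $\int_\D x_1^* \, d\varrho_p$ into the stated ratio, since by definition $c_{p,n} = \int_\D (\sum_i x_i^{2p-2})^{1/2} d\mu_p$. So the real work is the outer pair of inequalities.

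For these, the plan is to reduce each of the two $\mu_p$-integrals to an expectation against the random variables $\omega_1, \dots, \omega_n$ from \eqref{eq:gener1}, in direct imitation of Lemma \ref{lem1}. The key auxiliary identity I would prove is: if $g: \R_+^n \to \R_+$ is measurable and positively homogeneous of degree $\alpha$ (with the integrals that follow being finite), then
$$\int_\D g(x) \, d\mu_p(x) = \frac{\Gamma(n/p)}{\Gamma((n+\alpha)/p)} \, \mathbb{E}\, g(\omega_1, \dots, \omega_n).$$
This comes from plugging $f(x) = g(x) e^{-\|x\|_p^p}$ into the polar decomposition identity \eqref{eq:4}: on $\D$ the exponential equals $e^{-r^p}$ and homogeneity produces a factor $r^\alpha$, so the radial integral reduces to $\int_0^\infty r^{n+\alpha-1} e^{-r^p} \, dr = \Gamma((n+\alpha)/p)/p$. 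Combining this with Dirichlet's formula $\lambda([0,1]\cdot\D) = \Gamma(1/p+1)^n/\Gamma(n/p+1)$ and the identity $\mathbb{E}\, g(\omega) = c_p^n \int_{\R_+^n} g(x) e^{-\|x\|_p^p} \, dx$ yields, after the same cancellations as in the proof of Lemma \ref{lem1}, the stated ratio of Gamma functions.

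I would then apply this with $g_1(x) = x_1^* (\sum_i x_i^{2p-2})^{1/2}$ (homogeneous of degree $\alpha_1 = 1 + (p-1) = p$) and $g_2(x) = (\sum_i x_i^{2p-2})^{1/2}$ (homogeneous of degree $\alpha_2 = p-1$). Forming the quotient, the common factor $\Gamma(n/p)$ cancels and I obtain
$$\sigma_0^{p,\infty}(\varrho_p) = \frac{\Gamma(n/p + 1 - 1/p)}{\Gamma(n/p + 1)} \cdot \frac{\mathbb{E}\, \omega_1^* \bigl(\sum_i \omega_i^{2p-2}\bigr)^{1/2}}{\mathbb{E}\, \bigl(\sum_i \omega_i^{2p-2}\bigr)^{1/2}}.$$
The final step is to show that the $\Gamma$-ratio is bounded above and below by positive constants depending only on $p$ times $n^{-1/p}$; this is an immediate consequence of Stirling's formula \eqref{eq:Stirl} used exactly as in the last display of the proof of Lemma \ref{lem1}. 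I do not expect any serious obstacle: the only points requiring mild care are the homogeneity bookkeeping (keeping $\alpha_1$ and $\alpha_2$ straight) and, when $p<1$, the verification that $g_2$ is integrable near the coordinate hyperplanes, which is already implicit in the finiteness of $c_{p,n}$ from Lemma \ref{lem7}.
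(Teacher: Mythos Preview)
Your proposal is correct and follows essentially the same route as the paper: plug $f_1(x)=x_1^*\bigl(\sum_i x_i^{2p-2}\bigr)^{1/2}e^{-\|x\|_p^p}$ and $f_2(x)=\bigl(\sum_i x_i^{2p-2}\bigr)^{1/2}e^{-\|x\|_p^p}$ into the polar decomposition \eqref{eq:4}, take the quotient to obtain the exact identity
\[
\sigma_0^{p,\infty}(\varrho_p)=\frac{\mathbb{E}\,x_1^*\bigl(\sum_i x_i^{2p-2}\bigr)^{1/2}}{\mathbb{E}\,\bigl(\sum_i x_i^{2p-2}\bigr)^{1/2}}\cdot\frac{\Gamma(n/p+1-1/p)}{\Gamma(n/p+1)},
\]
and then estimate the $\Gamma$-ratio by Stirling. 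Your packaging of the polar-decomposition step as a single homogeneity lemma is a mild (and pleasant) abstraction, but the underlying computation and the final $\Gamma$-ratio are identical to the paper's.
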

for all $n\ge 2.$
\begin{proof}
Only the inequalities need a proof. It resembles the proof of Lemma \ref{lem1}
and is again based on the polar decomposition formula \eqref{eq:4}.

We plug the functions
$$
f_1(x)=x_1^*\Bigl(\sum_{i=1}^n x_i^{2p-2}\Bigr)^{1/2}e^{-x_1^p-\dots-x_n^p}
\quad\text{and}\quad
f_2(x)=\Bigl(\sum_{i=1}^n x_i^{2p-2}\Bigr)^{1/2}e^{-x_1^p-\dots-x_n^p}
$$
into \eqref{eq:4} and obtain
\begin{align*}
\sigma^{p,\infty}_{0}(\varrho_p)&=
\frac{\displaystyle \int_{\R_+^n} f_1(x)dx \cdot \int_0^\infty r^{n+p-2}e^{-r^p}dr}
{\displaystyle\displaystyle \int_{\R_+^n} f_2(x)dx \cdot \int_0^\infty r^{n+p-1}e^{-r^p}dr}\\
&=\frac{\displaystyle{\mathbb E\,} x_1^*\Bigl(\sum_{i=1}^n x_i^{2p-2}\Bigr)^{1/2}}
{\displaystyle{\mathbb E}\Bigl(\sum_{i=1}^n x_i^{2p-2}\Bigr)^{1/2}}\cdot 
\frac{\Gamma(n/p+1-1/p)}{\Gamma(n/p+1)}.
\end{align*}
By Stirling's formula, the last expression is equivalent to $n^{-1/p}$ with constants of equivalence depending only on $p$.
\end{proof}

\begin{thm}\label{thm9}
Let $0<p< \infty$. Then there is a positive real number $C_p$, such that
\begin{equation}\label{eq:9}
\sigma_{0}^{p,\infty}(\varrho_p) \le C_p \left[\frac{\log(n+1)}{n}\right]^{1/p}
\end{equation}
for all $n\ge 2.$
\end{thm}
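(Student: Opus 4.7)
The plan is to invoke Lemma~\ref{lem8}, which reduces the theorem to the inequality
$$
R_n:=\frac{\E\,x_1^*\sqrt{S}}{\E\sqrt{S}}\le C_p\log^{1/p}(n+1),\qquad S:=\sum_{i=1}^n x_i^{2p-2},
$$
since the additional factor $n^{-1/p}$ supplied by that lemma then produces the claim. I write $y_i:=x_i^{p-1}\ge 0$ so that $\sqrt{S}=\|y\|_2$, and I use the two sides of the Cauchy--Schwarz inequality for nonnegative vectors, namely $\|y\|_2\le\|y\|_1\le\sqrt{n}\,\|y\|_2$, to treat the numerator and the denominator separately.

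The denominator is handled by the lower bound $\sqrt{S}\ge n^{-1/2}\sum_i y_i$, which yields
$$
\E\sqrt{S}\ge\sqrt{n}\,\E\,x_1^{p-1}=C_p^1\sqrt{n},
$$
since the substitution $u=t^p$ gives $\E\,x_1^{p-1}=c_p\int_0^\infty t^{p-1}e^{-t^p}dt=c_p/p$, a positive $p$-dependent constant. For the numerator, combining $\sqrt{S}\le\|y\|_1$ with the layer-cake representation and the symmetry of the $n$ i.i.d.\ coordinates produces
$$
\E\,x_1^*\sqrt{S}\le\int_0^\infty\E\bigl[\sqrt{S}\,\chi_{\{x_1^*>t\}}\bigr]\,dt\le n\int_0^\infty\E\bigl[y_1\chi_{\{x_1^*>t\}}\bigr]\,dt.
$$
Splitting $\{x_1^*>t\}$ according to whether the maximum is attained by $x_1$ or by some $x_j$ with $j\ge 2$, and exploiting the independence of $y_1$ from $(x_2,\dots,x_n)$ in the second case, I obtain
$$
\E\bigl[y_1\chi_{\{x_1^*>t\}}\bigr]\le\E[y_1\chi_{\{x_1>t\}}]+\E y_1\cdot(n-1)\P(x_1>t).
$$
The substitution $u=t^p$ evaluates the first summand to $(c_p/p)e^{-t^p}$, while the argument already carried out in the proof of Lemma~\ref{lem2} (via Lemma~\ref{lem3}) yields $\P(x_1>t)\le C_p t^{1-p}e^{-t^p}$ whenever $t\ge t_0(p)$.

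Finally, I cut the outer integral at $T:=(C_p'\log(n+1))^{1/p}$, where $C_p'>3/2$ is chosen large. For $t\le T$ the integrand is trivially bounded by $\E\sqrt{S}$, contributing $T\,\E\sqrt{S}$. For $t>T$ the estimates above together with the tail bound
$\int_T^\infty t^\alpha e^{-t^p}\,dt\le C_{p,\alpha}\,T^{\alpha+1-p}e^{-T^p}$,
obtained from Lemma~\ref{lem3} after the same substitution $u=t^p$, produce
$$
\int_T^\infty\E\bigl[\sqrt{S}\,\chi_{\{x_1^*>t\}}\bigr]\,dt\le C_p\bigl(nT^{1-p}+n^2T^{2-2p}\bigr)(n+1)^{-C_p'}.
$$
Dividing by $\E\sqrt{S}\ge C_p^1\sqrt{n}$ shows that this contribution is bounded by an absolute constant (in fact tends to zero as $n\to\infty$) as soon as $C_p'>3/2$. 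Hence $R_n\le T+C_p\le C_p\log^{1/p}(n+1)$, and Lemma~\ref{lem8} concludes the proof. The main obstacle is the regime $0<p\le 1/2$, where $\E\,x_1^{2p-2}=\infty$ and therefore the naive Cauchy--Schwarz estimate $\E\,x_1^*\sqrt{S}\le(\E(x_1^*)^2\cdot\E S)^{1/2}$ is useless; the decisive point is to replace the $\ell_2$-norm $\sqrt{S}$ by the $\ell_1$-norm $\|y\|_1$, whose moments of every positive order are finite for every $p>0$, and to extract the logarithmic scale through the tail decay of a single coordinate rather than through a second moment of $S$.
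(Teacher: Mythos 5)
Your proposal is correct, and it is a genuine (if close) variant of the paper's argument. Both proofs start from Lemma~\ref{lem8}, use the layer-cake formula with a cutoff at $T\approx\log^{1/p}(n+1)$, a union bound over coordinates, and the one-dimensional tail estimates $\P(x_1>t)\le C_p t^{1-p}e^{-t^p}$ coming from Lemma~\ref{lem3}; and both correctly recognize that the naive Cauchy--Schwarz route is blocked for $p\le 1/2$ because $\E\,x_1^{2p-2}=\infty$, circumventing it by dominating the weight $\bigl(\sum_i x_i^{2p-2}\bigr)^{1/2}$ by $\sum_i x_i^{p-1}$. Where you differ is in the mechanism for handling that weight. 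The paper views the tilted density $\tilde c_{p,n}^{-1}\bigl(\sum_i x_i^{2p-2}\bigr)^{1/2}e^{-\|x\|_p^p}$ as a probability measure $\alpha_{p,n}$, bounds its marginal tail $\alpha_{p,n}(x_1>t)$ by splitting the weight as $x_1^{p-1}+\bigl(\sum_{i\ge 2}x_i^{2p-2}\bigr)^{1/2}$, and controls the resulting ratios through the monotonicity $c_p^n\tilde c_{p,n}\ge c_p^{n-1}\tilde c_{p,n-1}$ of the normalizing constants (inequality \eqref{eq:in1}); the extra factor $n$ then comes only from the union bound, so any threshold exponent $C_p'>1$ suffices. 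You instead estimate the numerator and denominator of $\E\,x_1^*\sqrt S/\E\sqrt S$ separately via $\|y\|_2\le\|y\|_1\le\sqrt n\,\|y\|_2$ with $y_i=x_i^{p-1}$, using exchangeability to reduce to $\E[y_1\chi_{\{x_1^*>t\}}]$ and then splitting the event (rather than the weight) and invoking independence of $y_1$ from $(x_2,\dots,x_n)$. This avoids the normalizing-constant monotonicity entirely at the price of an extra polynomial factor in $n$ ($n^{3/2}$ instead of $n$ after dividing by $\E\sqrt S\gtrsim\sqrt n$), which you correctly absorb by taking $C_p'>3/2$; this only affects the constant $C_p$ in \eqref{eq:9}, not the rate. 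The only points worth stating explicitly are the harmless requirements that $C_p'$ be large enough so that $T\ge t_0(p)$ (so the tail bound for $\P(x_1>t)$ and the hypotheses of Lemma~\ref{lem3} apply on $[T,\infty)$), both of which are uniform in $n\ge 2$.
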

\begin{proof}
We define a probability measure $\alpha_{p,n}$ on $\R^+_n$ by the density
\begin{equation*}
\tilde c_{p,n}^{-1}\cdot\left(\sum_{i=1}^nx_i^{2p-2}\right)^{1/2}e^{-x_1^p-\dots-x_n^p},\quad
\tilde c_{p,n}:=\int_{\R_+^n}\left(\sum_{i=1}^nx_i^{2p-2}\right)^{1/2}e^{-x_1^p-\dots-x_n^p}dx
\end{equation*}
with respect to the Lebesgue measure. Let us note, that due to the inequality
$$
\left(\sum_{i=1}^nx_i^{2p-2}\right)^{1/2}\le \sum_{i=1}^n x_i^{p-1}
$$
the integral in the definition of $\tilde c_{p,n}$ really converges and $\alpha_{p,n}$ is well defined.

According to Lemma \ref{lem8}, we need to estimate
$$
\int_{\R_+^n}x_1^* d\alpha_{p,n}(x).
$$
We calculate for $\delta>1$, which is to be chosen later on,
\begin{align*}
\int_{\R_+^n}x_1^* d\alpha_{p,n}(x)&=\int_0^\infty \alpha_{p,n}(x_1^*>t)dt
\le \delta+\int_{\delta}^\infty \alpha_{p,n}(x_1^*>t)dt\\
&\le \delta+n\int_{\delta}^\infty\alpha_{p,n}(x_1>t)dt.
\end{align*}
We write $x'=(x_2,\dots,x_n)\in\R_+^{n-1}$. Then
\begin{align*}
\alpha_{p,n}(x_1>t)&=\tilde c_{p,n}^{-1}\int_{t}^\infty e^{-x_1^p}\int_{\R_+^{n-1}}\left(\sum_{i=1}^nx_i^{2p-2}\right)^{1/2}e^{-x_2^p-\dots-x_n^p}dx' dx_1\\
&\le \tilde c_{p,n}^{-1}\int_{t}^\infty e^{-x_1^p}\int_{\R_+^{n-1}}\left[x_1^{p-1}+\left(\sum_{i=2}^nx_i^{2p-2}\right)^{1/2}\right]e^{-x_2^p-\dots-x_n^p}dx' dx_1\\
&=\tilde c_{p,n}^{-1}\int_{t}^\infty e^{-x_1^p}x_1^{p-1}dx_1\cdot \int_{\R_+^{n-1}}e^{-x_2^p-\dots-x_n^p}dx'\\
&\qquad+\tilde c_{p,n}^{-1}\int_{t}^\infty e^{-x_1^p}dx_1\cdot \int_{\R_+^{n-1}}\left(\sum_{i=2}^nx_i^{2p-2}\right)^{1/2}e^{-x_2^p-\dots-x_n^p}dx'\\
&:=I_1+I_2.
\end{align*}
The inequality
\begin{align}
\notag c_p^n \tilde c_{p,n}&=c_p^n \int_{\R_+^n}\left(\sum_{i=1}^nx_i^{2p-2}\right)^{1/2}e^{-x_1^p-\dots-x_n^p}dx\\
\label{eq:in1}&\ge c_p^n \int_{\R_+^n}\left(\sum_{i=2}^nx_i^{2p-2}\right)^{1/2}e^{-x_1^p-\dots-x_n^p}dx\\
\notag&=c_p^n \int_0^\infty e^{-x_1^p}dx_1\int_{\R_+^{n-1}}\left(\sum_{i=2}^nx_i^{2p-2}\right)^{1/2}e^{-x_2^p-\dots-x_n^p}dx'
=c_p^{n-1}\tilde c_{p,n-1}
\end{align}
shows, that
$$
I_1=\frac{c_p\int_{t}^\infty x_1^{p-1}e^{-x_1^p}dx_1}{c_p^n \tilde c_{p,n}}\le 
\frac{c_p\int_{t}^\infty x_1^{p-1}e^{-x_1^p}dx_1}{c_p \tilde c_{p,1}}
=\tilde c_{p,1}^{-1}\cdot\frac{e^{-t^p}}{p}.
$$
Using \eqref{eq:in1} again, we get also
$$
I_2=\tilde c_{p,n}^{-1}\cdot \tilde c_{p,n-1}\int_t^\infty e^{-x_1^p}dx_1
\le c_p\int_t^\infty e^{-x_1^p}dx_1=\frac{c_p}{p}\cdot\int_{t^p}^\infty s^{1/p-1}e^{-s}ds.
$$
If $p\ge 1$, we get
\begin{equation}\label{eq:kvak1}
I_1+I_2\le C_p e^{-t^p},\quad t>1
\end{equation}
and
$$
\int_{\R_+^n}x_1^* d\alpha_{p,n}(x)\le \delta + C_pn\int_{\delta}^\infty e^{-t^p}dt
\le \delta +C_p'ne^{-\delta^p}.
$$
By choosing $\delta=C_p \log(n+1)^{1/p}$, we get the result.

If $p<1$, we use the second estimate of Lemma \ref{lem3} and replace \eqref{eq:kvak1} with
$$
I_1+I_2\le C_pt^{1-p}e^{-t^p},\quad t>t_0
$$
for $t_0>1$ large enough and the result again follows by the choice of $\delta.$

\end{proof}

\begin{rem}
\begin{itemize}
\item[(i)] Theorem \ref{thm9} shows, that the average size of the largest coordinate of $x\in\D$
taken with respect to the normalized Hausdorff measure is again only slightly larger than $n^{-1/p}$.
Hence, also in this case, the typical element of $\D$ seems to be far from being sparse
and resembles rather properly normalized white noise in the sense described in Introduction.
\item[(ii)] Using interpolation inequality \eqref{eq:6}, one may again obtain a similar estimate also for $0<p\le q<\infty$, namely
$$
\sigma_{0}^{p,q}(\varrho_p) \le C_{p,q} \left[\frac{\log(n+1)}{n}\right]^{1/p-1/q}.
$$
It would be probably possible to avoid the logarithmic terms and provide improved estimates also for $m>0$, but we shall
not go into this direction. Our main aim of this section was to show, that normalized Hausdorff measure does not
prefer sparse (or nearly sparse) vectors, and this was clearly demonstrated by Theorem \ref{thm9}.
\end{itemize}
\end{rem}

\section{Tensor product measures}

As discussed already in the Introduction and proved in Theorem \ref{thm6} and Theorem \ref{thm9},
the average vectors of $\D$ with respect to the cone measure $\mu_p$ and with respect to
surface measure $\varrho_p$ behave ``badly'' meaning that (roughly speaking) many of their coordinates
are approximately of the same size. As promised before, we shall now introduce a new class of measures,
for which the random vector behaves in a completely different way. These measures are defined through
their density with respect to the cone measure $\mu_p$. This density has a strong singularity near the points
with vanishing coordinates.

\begin{dfn}\label{def14}
Let $0<p<\infty$, $\beta>-1$ and $n\ge 2$. Then we define the probability measure $\theta_{p,\beta}$
on $\D$ by
\begin{equation}\label{eq:11}
\frac{d \theta_{p,\beta}}{d\mu_p}(x)=c^{-1}_{p,\beta}\cdot \prod_{i=1}^n x_i^{\beta}, \quad x\in\D,
\end{equation}
where
\begin{equation}\label{eq:12}
c_{p,\beta}=\int_{\D}\prod_{i=1}^{n} x_i^{\beta} d\mu_p(x).
\end{equation}
\end{dfn}
\begin{rem}\label{rem4}
\begin{itemize}
\item[(i)] If $0>\beta>-1$, then \eqref{eq:11} defines the density of $\theta_{p,\beta}$
with respect to $\mu_p$ only for points, where $x_i\not=0$ 
for all $i=1,\dots, n$. That means, that this density is defined 
$\mu_p$-almost everywhere. The definition is then complemented by the statement,
that $\theta_{p,\beta}$ is absolutely continuous with respect to $\mu_p$.

\item[(ii)] We shall see later on, that the condition $\beta>-1$ ensures, that 
\eqref{eq:12} is finite.

\item[(iii)] It was observed already in \cite{BCN}, that the measures $\theta_{p,\beta}$ allow a formula similar to \eqref{eq:gener1}.
We plug the function $f(x)=\chi_{[0,\infty)\cdot {\mathcal A}}\prod_{i=1}^n x_i^\beta e^{-\|x\|_p^p}$ into \eqref{eq:4}, where ${\mathcal A}$ 
is any $\mu_p$-measurable subset of $\D$, and obtain
$$
\int_{[0,\infty)\cdot {\mathcal A}}\prod_{i=1}^nx_i^\beta e^{-\|x\|_p^p}d\lambda(x)=\lambda([0,1]\cdot \D)\cdot n\cdot\int_0^\infty r^{n-1+n\beta}e^{-r^p}dr
\cdot\int_{\mathcal A}\prod_{i=1}^nx_i^\beta d\mu_p(x).
$$
We use a similar formula also for ${\mathcal A}=\D$, which leads to
$$
\int_{{\mathcal A}}1d\,\theta_{p,\beta}=\frac{\displaystyle\int_{{\mathcal A}}\prod_{i=1}^nx_i^\beta d\mu_p(x)}{\displaystyle\int_{\D}\prod_{i=1}^nx_i^\beta d\mu_p(x)}
=\frac{\displaystyle \int_{[0,\infty)\cdot {\mathcal A}}\prod_{i=1}^nx_i^\beta e^{-\|x\|_p^p}dx}
{\displaystyle \int_{\R_+^n}\prod_{i=1}^n x_i^\beta e^{-\|x\|_p^p}dx}.
$$
Let $\omega'=(\omega_1',\dots,\omega_n')$ be a vector with independent identically distributed components
with respect to the density $c_{p,\beta}t^\beta e^{-t^p},t>0,$ where $c^{-1}_{p,\beta}=\int_0^\infty t^\beta e^{-t^p}dt$
is a normalizing constant. Up to a simple substitution, this is the well known \emph{gamma distribution}.
We observe that the distribution of random points with respect to $\theta_{p,\beta}$ equals to the distribution
of $\ell_p^n$ normalized vectors $\omega'$, i.e.
\begin{equation}\label{eq:generate}
\theta_{p,\beta}({\mathcal A})=
{\mathbb P}\Biggl(\frac{(\omega'_1,\dots,\omega'_n)}{\bigl(\sum_{j=1}^n{(\omega'_j)^p}\bigr)^{1/p}}\in 
{\mathcal A}\Biggr), \qquad {\mathcal A}\subset\D.
\end{equation}
\item[(iv)] Of course, the same procedure might be considered also for other distributions. We leave this to future work.
We also refer to the discussion on the recent work of Gribonval, Cevher, and Davies \cite{GCD} in the Introduction.
\end{itemize}
\end{rem}

\begin{lem}\label{lem15}
Let $0<p<\infty$, $\beta>-1$ and $n\ge 2$.
\begin{itemize}
\item[(i)] Let $1\le m \le n$. Then
$$
\sigma_{m-1}^{p,\infty}(\theta_{p,\beta})=
\int_{\D} x_m^* d\theta_{p,\beta}
=\frac{\displaystyle\E\, x_m^* \prod_{i=1}^n x_i^\beta}{\displaystyle\E\prod_{i=1}^n x_i^\beta}\cdot
\frac{\Gamma(n(\beta+1)/p)}{\Gamma(n(\beta+1)/p+1/p)}.
$$
\item[(ii)]
$$
\E\prod_{i=1}^n x_i^\beta=\left[\frac{c_p}{p}\cdot\Gamma((\beta+1)/p)\right]^n.
$$
\end{itemize}
\end{lem}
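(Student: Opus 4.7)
The proof naturally splits according to the two assertions, and both reduce to the polar decomposition identity \eqref{eq:4} combined with the probabilistic description \eqref{eq:gener1} of $\mu_p$ via the variables $\omega_1,\dots,\omega_n$.

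For part (ii), since the density of $\omega_i$ is $c_p e^{-t^p}$ and the coordinates are independent, the expectation factors as
$$
\E\prod_{i=1}^n x_i^\beta=\prod_{i=1}^n \int_0^\infty c_p\,t^\beta e^{-t^p}dt.
$$
Each one-dimensional integral is evaluated by the substitution $u=t^p$, giving $\frac{1}{p}\Gamma((\beta+1)/p)$, which is finite precisely because $\beta>-1$. Raising to the $n$-th power yields the claimed formula; this also shows that the normalising constant $c_{p,\beta}$ in Definition \ref{def14} is well defined.

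For part (i), by Definition \ref{def14} we have
$$
\sigma_{m-1}^{p,\infty}(\theta_{p,\beta})=\int_\D x_m^*\,d\theta_{p,\beta}(x)
=c_{p,\beta}^{-1}\int_\D x_m^*\prod_{i=1}^n x_i^\beta\,d\mu_p(x),
$$
so the task reduces to evaluating the surface integral on the right and the integral $c_{p,\beta}=\int_\D\prod_i x_i^\beta\,d\mu_p$ at the denominator. The plan is to mimic the trick used in Lemma \ref{lem1}: plug
$$
f(x)=x_m^*\prod_{i=1}^n x_i^\beta\,e^{-\|x\|_p^p}\quad\text{and}\quad g(x)=\prod_{i=1}^n x_i^\beta\,e^{-\|x\|_p^p}
$$
into the polar decomposition identity \eqref{eq:4}. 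Under the scaling $x\mapsto rx$ with $x\in\D$, the integrand $f(rx)$ picks up the factor $r^{1+n\beta}$ (one $r$ from $x_m^*$ and $n\beta$ from the product), while $\|rx\|_p^p=r^p$; similarly $g(rx)$ gains $r^{n\beta}$. Hence the radial integrals reduce to $\int_0^\infty r^{n+n\beta}e^{-r^p}dr$ and $\int_0^\infty r^{n-1+n\beta}e^{-r^p}dr$, both of which can be computed in closed form as $\tfrac{1}{p}\Gamma(n(\beta+1)/p+1/p)$ and $\tfrac{1}{p}\Gamma(n(\beta+1)/p)$ via the substitution $u=r^p$. The left-hand sides of the two applications of \eqref{eq:4}, after multiplying by $c_p^n$, are exactly $\E[x_m^*\prod_i x_i^\beta]$ and $\E[\prod_i x_i^\beta]$, by \eqref{eq:gener1} and independence of the $\omega_i$. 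Taking the ratio of the two resulting identities cancels $\lambda([0,1]\cdot\D)$, the factor $n$, and $c_p^n$, leaving the claimed expression.

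There is no serious obstacle here; the only point requiring a little care is book-keeping the homogeneity degree of $x_m^*\prod_i x_i^\beta$ under scaling (which is $1+n\beta$, not just $1$) so that the correct Gamma-function quotient appears. Once that bookkeeping is done, the claim follows by matching the two evaluations of \eqref{eq:4} and dividing.
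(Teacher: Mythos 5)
Your proof is correct and is essentially the paper's own argument: the paper likewise proves part (i) by plugging $f_1(x)=x_m^*\prod_i x_i^\beta e^{-x_1^p-\dots-x_n^p}$ and $f_2(x)=\prod_i x_i^\beta e^{-x_1^p-\dots-x_n^p}$ into the polar decomposition identity \eqref{eq:4} and taking the ratio, and treats part (ii) as a straightforward one-dimensional Gamma-integral computation. Your bookkeeping of the homogeneity degree and the resulting quotient $\Gamma(n(\beta+1)/p)/\Gamma(n(\beta+1)/p+1/p)$ matches the paper exactly.
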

\begin{proof}
The proof of the first part follows again by \eqref{eq:4}, this time used for the functions
$$
f_1(x)=x_m^*\Bigl(\prod_{i=1}^n x_i^\beta\Bigr) e^{-x_1^p-\dots-x_n^p}
\quad\text{and}\quad
f_2(x)=\Bigl(\prod_{i=1}^n x_i^\beta\Bigr) e^{-x_1^p-\dots-x_n^p}.
$$
The proof of the second part is straightforward.
\end{proof}
It follows directly from \eqref{eq:Gamma}, that $\Gamma(s)$ tends to infinity, when $s$ tends to zero.
The following lemma quantifies this phenomenon. Although the statement seems to be well known, we were
not able to find a reference and we therefore provide at least a sketch of the proof.
\begin{lem}\label{lem17}
Let $C\simeq 0.577\dots$ denote the Euler constant. Then
$$
\lim_{n\to\infty}\left(\frac{\Gamma(1/n)}{n}\right)^n=e^{-C}.
$$
\end{lem}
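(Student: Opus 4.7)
The plan is to reduce the expression to a familiar $1^\infty$ indeterminate form via the functional equation of $\Gamma$ and then apply a first-order Taylor expansion around $s=1$.

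First I would invoke the identity $\Gamma(s+1)=s\Gamma(s)$ with $s=1/n$ to rewrite
\[
\frac{\Gamma(1/n)}{n}=\Gamma\!\left(1+\tfrac{1}{n}\right),
\]
so that
\[
\left(\frac{\Gamma(1/n)}{n}\right)^{n}=\Gamma\!\left(1+\tfrac{1}{n}\right)^{n}.
\]
This removes the pole of $\Gamma$ at $0$ and turns the question into the behaviour of $\Gamma(1+x)^{1/x}$ as $x\to 0^{+}$.

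Next I would expand $\Gamma(1+x)$ around $x=0$. Since $\Gamma$ is smooth in a neighbourhood of $1$ with $\Gamma(1)=1$ and $\Gamma'(1)=\Psi(1)\Gamma(1)=-C$ (this is the standard value of the digamma function at $1$, which is recalled in the notation section of the paper), we have
\[
\Gamma(1+x)=1-Cx+O(x^{2}),\qquad x\to 0^{+}.
\]
Setting $x=1/n$ and taking the logarithm,
\[
n\log\Gamma\!\left(1+\tfrac{1}{n}\right)=n\log\!\left(1-\tfrac{C}{n}+O(n^{-2})\right)=-C+O(n^{-1}).
\]
Exponentiating and passing to the limit yields $\Gamma(1+1/n)^{n}\to e^{-C}$, which is the claim.

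There is essentially no hard step here: the only thing that must be cited (rather than re-derived) is $\Gamma'(1)=-C$, which is the classical identity $\Psi(1)=-C$ available in \cite[Chapter 6]{AS}. The rest is the standard $\log$/expand/exponentiate argument for limits of the form $(1+a/n+o(1/n))^{n}$.
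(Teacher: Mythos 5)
Your proof is correct and is essentially the paper's argument: both rewrite $\Gamma(1/n)/n=\Gamma(1+1/n)$ and reduce the limit to the classical value $\Psi(1)=\Gamma'(1)/\Gamma(1)=-C$ from \cite[Chapter 6]{AS}. The only cosmetic difference is that the paper extracts this via l'Hospital's rule applied to $n\log\Gamma(1+1/n)$ (writing $\Gamma'$ and $\Gamma$ as integrals), whereas you use a first-order Taylor expansion of $\Gamma$ at $1$ followed by the standard $\log$/exponentiate step.
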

\begin{proof}
It is enough to show, that
$$
\lim_{n\to\infty}n\cdot \log(\Gamma(1+1/n))=-C,
$$
which (by using the l'Hospital rule) follows from
$$
\lim_{n\to\infty}\frac{\int_0^\infty s^{1/n}e^{-s}\log s\,ds}{\int_0^\infty s^{1/n}e^{-s}ds}=-C.
$$
But the numerator of this fraction is equal to $\Gamma'(1+1/n)$ and its denominator to $\Gamma(1+1/n)$. The whole fraction
is therefore equal to $\Psi(1+1/n)$ and $\Psi(1+1/n)\to \Psi(1)=-C$ as $n$ tends to infinity, cf. \cite[Section 6.3.2, p. 258]{AS}.
\end{proof}
Next theorem shows, that if $\beta=p/n-1$, then the measure $\theta_{p,\beta}$ promotes sparsity and one may even consider limiting behavior
of $n$ growing to infinity.
\begin{thm} \label{thm17}
Let $0<p<\infty$ and let $n\ge 2$ and $1\le m\le n$ be integers.
Then
\begin{equation}\label{eq:F1}
\sigma_{m-1}^{p,\infty}(\theta_{p,p/n-1}) \ge C^1_p \cdot \frac{\Gamma(n+1)}{\Gamma(n-m+1)}\cdot\frac{\Gamma(n/p+n-m+1)}{\Gamma(n/p+n+1)},
\end{equation}
and
\begin{equation}\label{eq:F2}
\sigma_{m-1}^{p,\infty}(\theta_{p,p/n-1}) \le C_p^2 \cdot \frac{\Gamma(n+1)}{\Gamma(n-m+1)}
\left\{\frac{\Gamma(n/p+n-m+1)}{\Gamma(n/p+n+1)}+\frac{1}{m!}\cdot\left(\frac{e^{-1}}{\Gamma(1/n)}\right)^m\right\}
\end{equation}
where $C^1_p$ and $C_p^2$ are positive real numbers depending only on $p$.

Furthermore, for every fixed $m\in\N$,
\begin{align}\label{eq:final1}
\frac{C_p^1}{\left(\frac{1}{p}+1\right)^m}&\le \liminf_{n\to\infty} \sigma_{m-1}^{p,\infty}(\theta_{p,p/n-1})
\le \limsup_{n\to\infty} \sigma_{m-1}^{p,\infty}(\theta_{p,p/n-1}) \le
\frac{C_p^2}{\left(\frac{1}{p}+1\right)^m},
\end{align}
where $C^1_p$ and $C_p^2$ are positive real numbers depending only on $p$.
\end{thm}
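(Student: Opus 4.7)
The plan is to combine Lemma~\ref{lem15} with Remark~\ref{rem4}(iii) to reduce the problem to the expected $n/p$-th power of an order statistic of an explicit sequence of i.i.d.\ random variables. Setting $\beta = p/n-1$ makes $n(\beta+1)/p = 1$, so that the ratio $\Gamma(n(\beta+1)/p)/\Gamma(n(\beta+1)/p+1/p)$ in Lemma~\ref{lem15}(i) reduces to $1/\Gamma(1+1/p)$. The change of variables $Y_i = x_i^p$ (so that each $Y_i$ has density $Y^{1/p-1}e^{-Y}/\Gamma(1/p)$) exhibits the weight $\prod x_i^{p/n-1} = \prod Y_i^{1/n-1/p}$ as a change of measure from $\mathrm{Gamma}(1/p,1)$ to $\mathrm{Gamma}(1/n,1)$, giving
\begin{equation*}
\sigma_{m-1}^{p,\infty}(\theta_{p,p/n-1}) = \frac{\E(\tilde Y_m^*)^{1/p}}{\Gamma(1+1/p)},
\end{equation*}
where $\tilde Y_1,\dots,\tilde Y_n$ are i.i.d.\ $\mathrm{Gamma}(1/n,1)$. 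Substituting further $V_i = \tilde Y_i^{1/n}$ gives i.i.d.\ $V_i$ with density $f_V(v) = e^{-v^n}/b$ on $(0,\infty)$, where $b=\Gamma(1+1/n)$, and $(\tilde Y_m^*)^{1/p} = V_m^{*n/p}$.

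The key observation is that, as $n\to\infty$, $f_V$ converges to the uniform density on $(0,1)$, and the $n/p$-moment of the $m$-th largest order statistic of $n$ i.i.d.\ $U(0,1)$ variables equals precisely the main term $\frac{\Gamma(n+1)}{\Gamma(n-m+1)}\cdot\frac{\Gamma(n/p+n-m+1)}{\Gamma(n/p+n+1)}$ via a direct Beta-function computation. For the lower bound \eqref{eq:F1}, I would argue by stochastic domination: the bound $e^{-u^n}\le 1$ gives $F_V(v)\le v/b$, so $V_i$ stochastically dominates $U(0,b)$, and hence $\E V_m^{*n/p} \ge b^{n/p}\cdot\E W_m^{*n/p}$ with $W_i\sim U(0,1)$. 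By Lemma~\ref{lem17} the factor $b^{n/p}=\Gamma(1+1/n)^{n/p}$ converges to $e^{-C/p}$ and is bounded below by a positive constant depending only on $p$.

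For the upper bound \eqref{eq:F2}, I would use the order-statistic representation
\begin{equation*}
\E V_m^{*n/p} = \frac{n!}{(m-1)!(n-m)!}\int_0^\infty v^{n/p}(1-F_V(v))^{m-1}F_V(v)^{n-m}f_V(v)\,dv
\end{equation*}
and split it at $v=1$. On $(0,1)$, the reverse inequality $e^{-u^n}\ge 1-u^n$ shows $F_V(v)\ge (v/b)\cdot n/(n+1)$; changing variables to $w=F_V(v)$ and comparing with the uniform case bounds the contribution by a $p$-dependent constant times the main term. On $(1,\infty)$, the convexity estimate $u^n\ge n(u-1)+1$ yields both $\int_1^\infty e^{-u^n}\,du\le e^{-1}/n$ (hence $1-F_V(1)\le e^{-1}/\Gamma(1/n)$, using $\Gamma(1/n)=nb$) and a corresponding exponential bound on $f_V$; combined with $F_V(v)^{n-m}\le 1$ and the substitution $t=v^n$ in $\int_1^\infty v^{n/p}f_V(v)\,dv$ (which becomes an upper incomplete gamma at $1$, bounded by a $p$-dependent constant), one obtains a tail bound that, after multiplication by the combinatorial prefactor, reproduces the claimed correction term $\binom{n}{m}(e^{-1}/\Gamma(1/n))^m$ up to constants depending only on $p$.

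Finally, the asymptotic \eqref{eq:final1} follows from the identity $\frac{\Gamma(n+1)}{\Gamma(n-m+1)}\cdot\frac{\Gamma(n/p+n-m+1)}{\Gamma(n/p+n+1)} = \prod_{k=0}^{m-1}\frac{n-k}{n/p+n-k} \to (1+1/p)^{-m}$ for fixed $m$, while the correction $\binom{n}{m}(e^{-1}/\Gamma(1/n))^m$ tends to $e^{-m}/m!$ by Lemma~\ref{lem17}, which is negligible relative to the main term after division by $\Gamma(1+1/p)$. The main obstacle I anticipate is the sharpness of the combinatorial factor in the tail contribution on $(1,\infty)$: the crude bound $(1-F_V(v))^{m-1}\le (1-F_V(1))^{m-1}$ throws away the rapid decay of $1-F_V(v)$ for $v>1$ and appears to lose a factor of $m$ relative to the stated $1/m!$, so recovering the exact combinatorial factor will require exploiting the pointwise estimate $1-F_V(v)\le e^{-n(v-1)-1}/\Gamma(1/n)$ together with the bound $(1+s/n)^{n/p}\le e^{s/p}$ to control $v^{n/p}$ on the tail.
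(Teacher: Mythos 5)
Your reduction is, in substance, the same computation as the paper's: the self-normalized change of measure turning the weight $\prod_i x_i^{p/n-1}$ into a product Gamma$(1/n,1)$ law, followed by the substitution $V_i=\tilde Y_i^{1/n}$, is exactly the paper's identity \eqref{eq:kvak3}, with your $F_V^{-1}$ playing the role of the quantile $\omega(y)^{1/n}$ there, and the Beta-function evaluation of the uniform case gives the same main term. Your lower bound ($F_V(v)\le v/b$, hence $V_m^*$ stochastically dominates $b\,W_m^*$, with $b^{n/p}=\Gamma(1+1/n)^{n/p}$ bounded below via Lemma \ref{lem17}) is literally the paper's bound $\omega(y)\ge(\gamma y/n)^n$, and on the region $v\le 1$ your inequality $F_V(v)\ge\frac{n}{n+1}\frac{v}{b}$ (from $e^{-u^n}\ge 1-u^n$) is if anything simpler than the paper's convexity argument for $\omega(y)\le Cy^n$. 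The one place where you are weaker is the tail $v>1$, and you have diagnosed it correctly: the crude bound $(1-F_V(v))^{m-1}\le(1-F_V(1))^{m-1}$ produces $m\binom{n}{m}(e^{-1}/\Gamma(1/n))^m$ instead of $\binom{n}{m}(e^{-1}/\Gamma(1/n))^m$. This still yields \eqref{eq:final1}, and in fact even \eqref{eq:F2} with a different $C_p^2$, because $\binom{n}{m}\Gamma(1/n)^{-m}e^{-m}$ divided by the main term is at most $(c(1+1/p)/e)^m/m!$ with $c=\sup_n n/\Gamma(1/n)<\infty$, so the extra factor $m$ can be absorbed; alternatively, your proposed repair via $1-F_V(v)\le e^{-1-n(v-1)}/\Gamma(1/n)$ and $v^{n/p}\le e^{n(v-1)/p}$ works, but note the resulting integral $\int_1^\infty e^{-n(v-1)(m-1/p)}\,dv$ converges only for $m>1/p$, so the finitely many $m\lesssim 1/p$ must be handled by the crude bound (where the lost factor $m$ is then a constant depending only on $p$). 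The paper avoids any such case distinction by bounding the quantile logarithmically, $\omega(y)\le-\ln(\gamma(1-y))$ near $y=1$, and evaluating $\int_0^{e^{-1}/\gamma}|\ln(\gamma y)|^{k}y^{m-1}\,dy$ by iterated integration by parts, which produces the factor $1/m!$ exactly; your final limiting argument for \eqref{eq:final1} agrees with the paper's Stirling-formula step.
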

\begin{proof}
First observe, that $n(\beta+1)/p=1$ for $\beta=p/n-1$ and therefore
$$
\frac{\Gamma(n(\beta+1)/p)}{\Gamma(n(\beta+1)/p+1/p)}=\frac{1}{\Gamma(1+1/p)}
$$
depends only on $p$. Due to Lemma \ref{lem15}, we have to estimate
\begin{equation}\label{eq:kvak2}
\E\, x_m^* \biggl(\prod_{i=1}^n x_i^{p/n-1}\biggr)=
c_p^n\int_{\R^d_+}x_m^*\prod_{i=1}^n x_i^{p/n-1} e^{-x_1^p-\dots-x_n^p}dx.
\end{equation}
Let $t=x_m^*$ and let us assume, that there is only one coordinate $j=1,\dots,n$, such that $x_j=t$.
Obviously, this assumption holds almost everywhere. Of course, we have $n$ possibilities for $j$.
Furthermore, $m-1$ from the remaining $n-1$ components of $x$ are bigger than $t$ and the remaining 
$n-m$ components are smaller.
This allows to rewrite \eqref{eq:kvak2} as
\begin{align*}
c_p^n\, n\binom{n-1}{m-1}&\int_0^\infty t^{p/n}e^{-t^p}\biggl(\int_0^t u^{p/n-1} e^{-u^p}du\biggr)^{n-m}\times\\
&\times \biggl(\int_t^\infty u^{p/n-1} e^{-u^p}du\biggr)^{m-1}dt\\
=\frac{c_p^n n}{p^n}\binom{n-1}{m-1}&\int_0^\infty \omega^{1/p+1/n-1}e^{-\omega}
\biggl(\int_0^\omega s^{1/n-1}e^{-s}ds\biggr)^{n-m}\times\\
&\times\biggl(\int_\omega^\infty s^{1/n-1}e^{-s}ds\biggr)^{m-1}d\omega.
\end{align*}
Let us denote 
\begin{equation}\label{eq:g:1}
\gamma=\Gamma(1/n)=\int_0^\infty s^{1/n-1}e^{-s}ds\quad\text{and}\quad
y(\omega)=\gamma^{-1}\cdot\int_0^\omega s^{1/n-1}e^{-s}ds.
\end{equation}
Then $y(\omega)$ is a non-decreasing function of $\omega$, $y(0)=0$ and $\lim_{\omega\to\infty}y(\omega)=1.$
We denote by $\omega(y)$ its inverse function, i.e. 
\begin{equation}\label{eq:g:2}
y=\gamma^{-1}\cdot\int_0^{\omega(y)} s^{1/n-1}e^{-s}ds,\quad 0\le y\le 1.
\end{equation}
Using this notation, we obtain
$$
\E\, x_m^* \biggl(\prod_{i=1}^n x_i^{p/n-1}\biggr)=
\frac{c_p^n\,\gamma^n}{p^n}\, n\binom{n-1}{m-1}\int_0^1 \omega(y)^{1/p} y^{n-m}(1-y)^{m-1}dy
$$
and
\begin{equation}\label{eq:kvak3}
\sigma_{m-1}^{p,\infty}(\theta_{p,p/n-1})=\frac{\Gamma(n+1)}{\Gamma(m)\Gamma(n-m+1)}
\int_0^1\omega(y)^{1/p}y^{n-m}(1-y)^{m-1}dy,
\end{equation}
where $\omega(y)$ is given by \eqref{eq:g:2}.

{\it Step 1. Estimate from below}

The estimate 
$$
\gamma y=\int_0^{\omega(y)} s^{1/n-1}e^{-s} ds\le \int_0^{\omega(y)} s^{1/n-1}ds=n\omega(y)^{1/n}
$$
implies together with Lemma \ref{lem17}
$$
\omega(y)\ge \left(\frac{\gamma y}{n}\right)^n \ge cy^n
$$
with $c$ independent of $n$. This gives finally
\begin{align*}
\sigma_{m-1}^{p,\infty}(\theta_{p,p/n-1})&\ge c^{1/p}\cdot\frac{\Gamma(n+1)}{\Gamma(m)\Gamma(n-m+1)}
\cdot \int_0^1 y^{n/p+n-m}(1-y)^{m-1}dy\\
&=c^{1/p}\cdot\frac{\Gamma(n+1)}{\Gamma(m)\Gamma(n-m+1)}\cdot B(n/p+n-m+1,m)\\
&=c^{1/p}\cdot\frac{\Gamma(n+1)}{\Gamma(n-m+1)}\cdot\frac{\Gamma(n/p+n-m+1)}{\Gamma(n/p+n+1)},
\end{align*}
where we used the Beta function \eqref{eq:Beta} and the proof of \eqref{eq:F1} is complete.

{\it Step 2. Estimate from above}\\
Let us first take $y$, such that $1-e^{-1}/\gamma\le y\le 1$. Then $-\ln(\gamma(1-y))\ge 1$ and
$$
\int_{-\ln(\gamma(1-y))}^\infty s^{1/n-1}e^{-s}ds\le 
\int_{-\ln(\gamma(1-y))}^\infty e^{-s}ds=\gamma(1-y).
$$
Hence, 
\begin{equation}\label{eq:omega:1}
\omega(y)\le -\ln(\gamma(1-y)),\quad 1-e^{-1}/\gamma\le y\le 1.
\end{equation}

Finally, we observe, that
$$
f:y\to \int_{Cy^n}^\infty s^{1/n-1}e^{-s}ds
$$
is a convex function on $\R_+$, $f(0)=\gamma$ and
\begin{align*}
f(1-e^{-1}/\gamma)&=\int_{C(1-e^{-1}/\gamma)^n}^\infty s^{1/n-1}e^{-s}ds\\
&\le \int_1^\infty s^{1/n-1}e^{-s}ds\le e^{-1},
\end{align*}
if we choose $C$ so large, that $C(1-e^{-1}/\gamma)^n\ge 1$ for all $n\in\N.$ This is indeed possible, while a byproduct of Lemma
\ref{lem17} is also a relation $\lim_{n\to\infty}\gamma/n=1.$ Using the convexity of $f$, we obtain
$$
f(y)\le \gamma(1-y),\quad 0\le y \le 1-e^{-1}/\gamma,
$$
which further leads to
\begin{equation}\label{eq:omega:2}
\omega(y)\le Cy^n,\quad 0\le y\le 1-e^{-1}/\gamma.
\end{equation}

We insert \eqref{eq:omega:1} and \eqref{eq:omega:2} into \eqref{eq:kvak3} and obtain
\begin{equation}\label{eq:omega:3}
\sigma_{m-1}^{p,\infty}(\theta_{p,p/n-1})\le \frac{\Gamma(n+1)}{\Gamma(m)\Gamma(n-m+1)}\left\{C^{1/p} I_1+I_2\right\},
\end{equation}
where
$$
I_1:=\int_0^{1-e^{-1}/\gamma} y^{n/p+n-m}(1-y)^{m-1}dy
$$
and
$$
I_2:=\int_{1-e^{-1}/\gamma}^1 |\ln(\gamma(1-y))|^{1/p}y^{n-m}(1-y)^{m-1}dy.
$$
The first integral may be estimated again using the Beta function, which gives
\begin{equation}\label{eq:omega:4}
I_1\le B(n/p+n-m+1, m).
\end{equation}
We denote by $k$ the uniquely defined integer, such that $1/p\le k<1/p+1$ holds, and estimate
\begin{align*}
I_2\le \int_{1-e^{-1}/\gamma}^1 |\ln(\gamma(1-y))|^{1/p}(1-y)^{m-1}dy
\le I_{k,m}:=\int_{0}^{e^{-1}/\gamma} |\ln(\gamma y)|^{k}y^{m-1}dy.
\end{align*}
Next, we use partial integration to estimate $I_{k,m}$. We obtain
$$
I_{k,m}=\frac{1}{m}\left(\frac{e^{-1}}{\gamma}\right)^m+\frac{k}{m}\cdot I_{k-1,m}.
$$
Together with $I_{0,m}=1/m\cdot (e^{-1}/\gamma)^m$, this leads finally to
$$
I_{k,m}\le \frac{(k+1)!}{m}\left(\frac{e^{-1}}{\gamma}\right)^m.
$$
This, together with \eqref{eq:omega:3} and \eqref{eq:omega:4} finishes the proof of \eqref{eq:F2}.

The proof of \eqref{eq:final1} then follows directly by Stirling's formula \eqref{eq:Stirl}.
\end{proof}

\begin{rem}
\begin{itemize}
\item[(i)] Let us take $m=0$. Then the formula \eqref{eq:final1} describes an essentially different behavior compared to the normalized cone and surface measure.
Namely, the expected value of the largest coordinate of $x\in \D$ with respect to $\theta_{p,p/n-1}$ does not decay to zero with
$n$ growing to infinity. We shall demonstrate this effect also numerically in next section.

\item[(ii)] If $m>0$, then \eqref{eq:final1} shows, that $\sigma_m^{p,\infty}(\theta_{p,p/n-1})$ decays exponentially fast with $m$,
as soon as $n$ is large enough. That means, that for $n$ large enough, the average vector of $\D$ exhibits a strong sparsity-like structure.
Namely, its $m$-th largest component decays exponentially with $m$.

\item[(iii)] We have chosen in \eqref{eq:11} a different $\beta$ for each $n$, namely $\beta_n=p/n-1>-1$. This was of course a crucial ingredient
in the proof of Theorem \ref{thm17}. It is not difficult to modify the analysis of the proof of Theorem \ref{thm17} to the situation, when
$\beta>-1$ is fixed for all $n\in\N$. In this case we obtain again, that (up to logarithmic factors) $\sigma_{0}^{p,\infty}(\theta_{p,\beta})$
is equivalent to $n^{-1/p}$ with constants of equivalence depending on $p>0$ and $\beta>-1.$ 

\item[(iv)] Last, but not least, we observe, that one may choose $p=1$ or even $p=2$ in Theorem \ref{thm17} and still obtains
the exponential decay of coordinates as described by \eqref{eq:final1}. 
It seems, that there is no significant connection between sparsity of an average vector of $x\in \D$
and the size of $p>0.$
\end{itemize}
\end{rem}
\section{Numerical experiments}
\subsection{Cone measure}
We would like to demonstrate the most significant effects of the theory also by numerical experiments. We start with the case of the cone measure.
The key role is played by \eqref{eq:gener1}. It may be interpreted in the following way. To generate a random point on $\D$
with respect to the normalized cone measure, it is enough to generate $\omega_1,\dots,\omega_n$ with respect
to the density $c_pe^{-t^p},t>0$ and then calculate
$$
\frac{(\omega_1,\dots,\omega_n)}{\bigl(\sum_{j=1}^n{\omega_j^p}\bigr)^{1/p}}\in\D.
$$
This method is very practical, as the running time of this algorithm depends only linearly on $n$.

Let us note, that the values of $\omega_i$ may be generated very easily. For example the package \emph{GNU Scientific Library}
\cite{GSL} implements a random number generator with respect to the gamma distribution using the method described
in the classical work of Knuth \cite{K}. Using this package, we generated $10^8$ random points $x\in \D$ for
$n=100$ and $p\in\{1/2,1,2\}$ to approximate numerically the value of $n^{1/p}\cdot \int_{\D}x_m^* d\mu_p(x)$. The result
may be found in the Figure \ref{fig:1}.

\subsection{Tensor measures}

As pointed out in Remark \ref{rem4}, point (iii), a random point on $\D$ with respect to $\theta_{p,\beta}$ may 
be generated in the following way. We generate $\omega'_1,\dots,\omega'_n$
with respect to the density $c_{p,\beta}t^\beta e^{-t^p},t>0$,
where $c^{-1}_{p,\beta}=\int_0^\infty t^\beta e^{-t^p}dt$ is a normalizing constant
and we consider the vector
$$
\frac{(\omega'_1,\dots,\omega'_n)}{\bigl(\sum_{j=1}^n(\omega'_j)^p\bigr)^{1/p}}\in\D.
$$
Also this may be easily done with the help of \cite{GSL}. We generated again $10^8$ random points $x\in\D$ with respect to 
$\theta_{p,p/n-1}$ for $n=100$ and $p\in\{1/2,1,2\}$. Then we used those points to numerically approximate
the expression $\log_{10}(\int_{\D}x_m^* d\theta_{p,p/n-1}).$

\begin{figure}[h]
  \subfloat[$n^{1/p}\cdot \int_{\D}x_m^* d\mu_p(x)$]{\includegraphics[width=.5\textwidth]{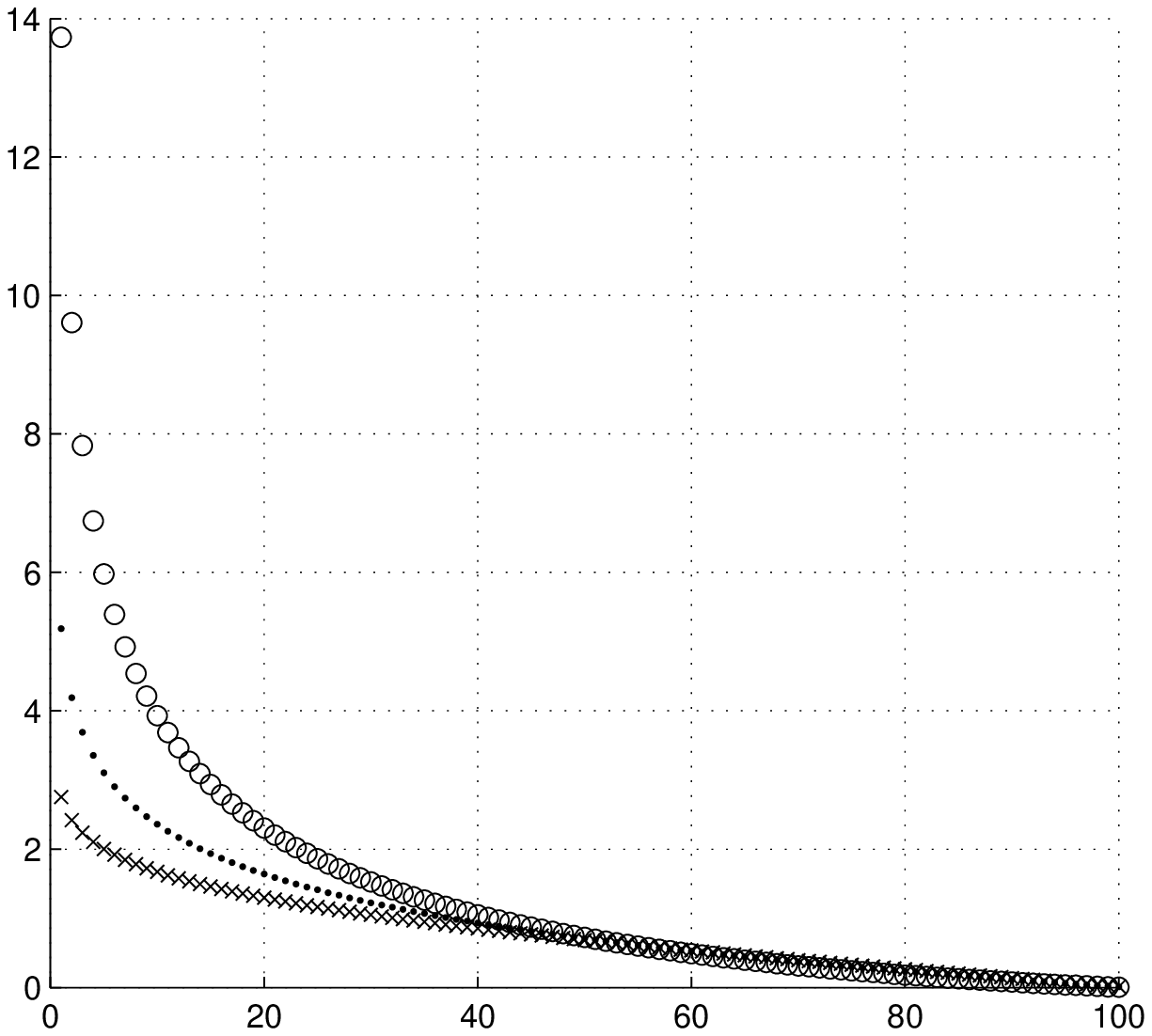}}
  \subfloat[$\log_{10}(\int_{\D}x_m^* d\theta_{p,p/n-1})$]{\includegraphics[width=.5\textwidth]{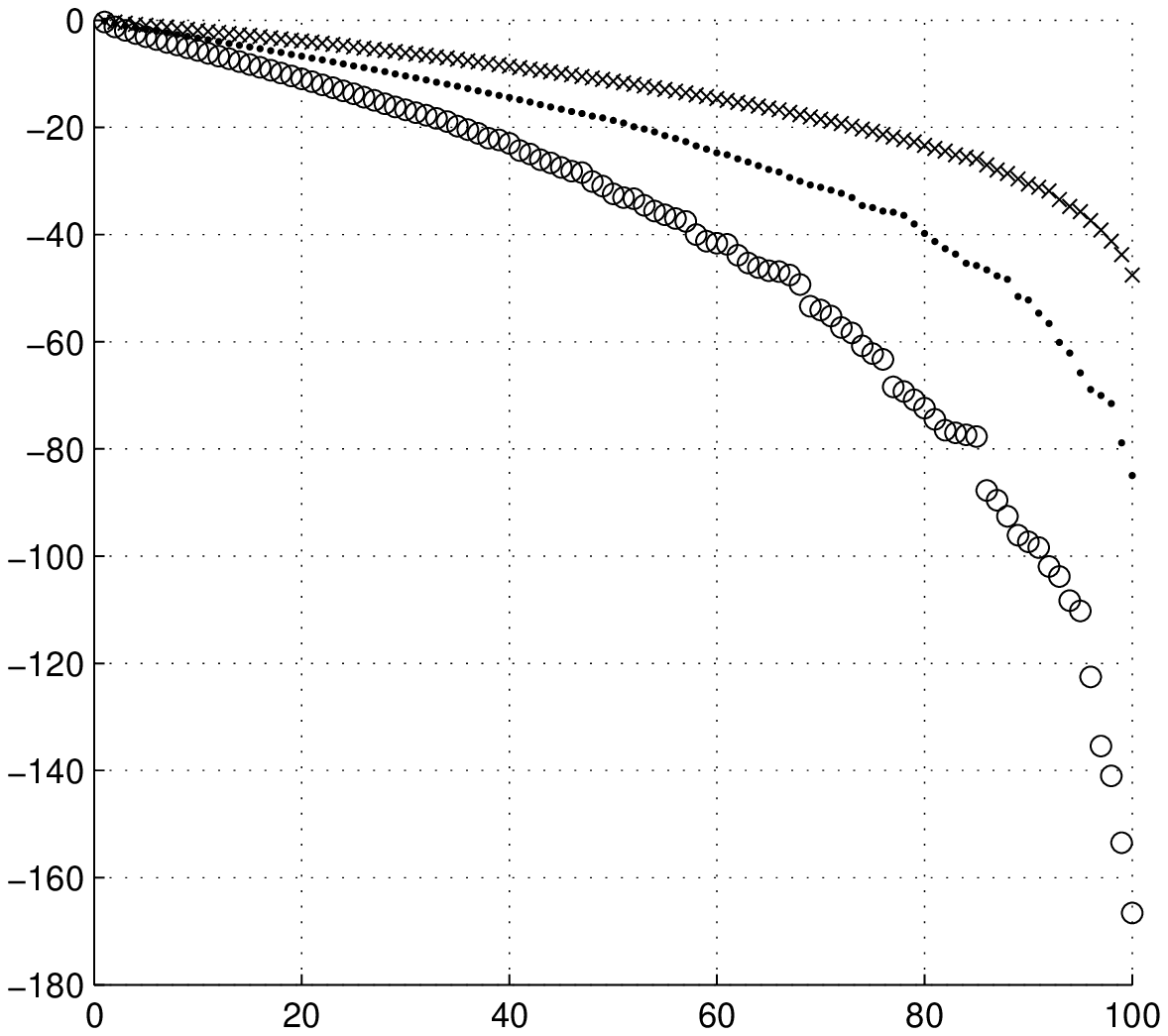}}\hfill
  \caption{Approximations of $n^{1/p}\cdot \int_{\D}x_m^* d\mu_p(x)$ (left) and $\log_{10}(\int_{\D}x_m^* d\theta_{p,p/n-1})$
  (right) for $n=100$, $p=1/2 (\circ)$, $p=1 (\bullet)$ and $p=2 (\times)$ based on sampling of $10^8$ random points.}
\label{fig:1}
\end{figure}

\subsubsection*{Acknowledgments}
I would like to thank to Stephan Dahlke, Massimo Fornasier, Aicke Hinrichs, Erich Novak  and Henryk Wo\'zniakowski for their interest
in this topic and the anonymous referees for their valuable comments and remarks, which helped to greatly improve the quality
of the presented paper. In particular, the proof of the second part of Theorem \ref{thm6} was suggested by one of the referees.
I acknowledge the financial support provided by the START-award ``Sparse Approximation and 
Optimization in High Dimensions'' of the Fonds zur F\"orderung der wissenschaftlichen Forschung (FWF, Austrian Science Foundation).


\thebibliography{99}
\bibitem{AS} M.~Abramowitz and I.~A.~Stegun,
\emph{Handbook of mathematical functions with formulas, graphs, and mathematical tables},
U.S. Government Printing Office, Washington, D.C. 1964.

\bibitem{ABP} M.~Anttila, K.~Ball and I.~Perissinaki,
\emph{The central limit problem for convex bodies},
Trans. Amer. Math. Soc. 355 (2003), no. 12, 4723--4735.

\bibitem{BP} K.~Ball and I.~Perissinaki,
\emph{The subindependence of coordinate slabs in $\ell^n_p$ balls},
Israel J. Math. 107 (1998), 289--299.

\bibitem{BCN}
F. Barthe, M. Cs\"ornyei and A. Naor,
{\it A note on simultaneous polar and Cartesian decomposition},
in: Geometric Aspects of Functional Analysis, Lecture Notes in Mathematics,
Springer, Berlin, 2003.

\bibitem{BGMN} F.~Barthe, O.~Gu\'edon, S. Mendelson and A.~Naor,
\emph{A probabilistic approach to the geometry of the $l^n_p$-ball}, Ann. Probab. 33 (2005), no. 2, 480--513.

\bibitem{BS} C.~Bennett and R.~Sharpley, Interpolation of operators,
Pure and Applied Mathematics, 129, Academic Press, Boston, 1988.

\bibitem{BSFMD} J.~Bobin, J.-L.~Starck, J.~M.~Fadili, Y.~Moudden and D.~L.~Donoho, 
\emph{Morphological Component Analysis: An Adaptive Thresholding Strategy}, IEEE Trans. Image Process. 16 (2007), no. 11, 2675--2681.

\bibitem{CRT1} E.~J.~Cand\'es, J.~K.~Romberg and T.~Tao,
\emph{Stable signal recovery from incomplete and inaccurate measurements},
Comm. Pure Appl. Math. 59 (2006), no. 8, 1207--1223.

\bibitem{Ca} E.~J.~Cand\'es, \emph{Compressive sampling}, In Proceedings of the International Congress of Mathematicians,
Madrid, Spain, 2006.

\bibitem{CRT2} E.~J.~Cand\'es, J.~K.~Romberg and T.~Tao,
\emph{Robust uncertainty principles: exact signal reconstruction from highly incomplete frequency information},
IEEE Trans. Inform. Theory 52 (2006), no. 2, 489--509.

\bibitem{CT} E.~J.~Cand\'es and T.~Tao, \emph{Decoding by linear programming},
IEEE Trans. Inform. Theory 51 (2005), no. 12, 4203--4215.

\bibitem{Cev} V. Cevher, \emph{Learning with compressible priors}, 
In Neural Information Processing Systems (NIPS), 2009.

\bibitem{CGI} F.~Champagnat, Y.~Goussard and J.~Idier, \emph{Unsupervised deconvolution of sparse spike 
trains using stochastic approximation}, IEEE Trans. Signal Process. 44 (1996), no. 12, 2988 -- 2998.

\bibitem{CDD} A.~Cohen, W.~Dahmen, and R.~DeVore, \emph{Compressed sensing and best $k$-term approximation},
J. Amer. Math. Soc. 22 (2009), no. 1, 211--231.

\bibitem{DNS} S.~Dahlke, E.~Novak and W. Sickel,
\emph{Optimal approximation of elliptic problems by linear and nonlinear mappings I},
J. Complexity 22 (2006), no. 1, 29–-49. 

\bibitem{DN} H. A. David and H. N. Nagaraja, \emph{Order Statistics}, Wiley-Interscience, 2004

\bibitem{D} R.~A.~DeVore, {\it Nonlinear approximation}, Acta Num. 51--150, (1998).

\bibitem{DJP} R.~A.~DeVore, B.~Jawerth and V.~Popov, \emph{Compression of wavelet decompositions},
Amer. J. Math. 114 (1992), no. 4, 737–-785.

\bibitem{Do} D.~L.~Donoho, \emph{Compressed sensing}, IEEE Trans. Inform. Theory 52 (2006), no. 4, 1289--1306.

\bibitem{Dv} A. Dvoretzky, \emph{Some results on convex bodies and Banach spaces},
Proc. Internat. Sympos. Linear Spaces - Jerusalem 1960, (1961), 123--160.

\bibitem{E} J. Edwards, \emph{A treatise on the integral calculus}, Vol. II, Chelsea Publishing Company, New York, 1922.

\bibitem{F} T.~Figiel, \emph{A short proof of Dvoretzky's theorem on almost spherical sections of convex bodies},
Compositio Math. 33 (1976), no. 3, 297--301.

\bibitem{FLM} T.~Figiel, J.~Lindenstrauss and V.~D.~Milman,
\emph{The dimension of almost spherical sections of convex bodies},
Acta Math. 139 (1977), no. 1-2, 53--94.

\bibitem{Fo} M. Fornasier, \emph{Numerical methods for sparse recovery}, 
Theoretical Foundations and Numerical Methods for Sparse Recovery, (Massimo Fornasier Ed.) 
Radon Series on Computational and Applied Mathematics 9, 2010.

\bibitem{FoRa} S.~Foucart and H.~Rauhut, \emph{A mathematical introduction to compressive sensing}, 
Appl. Numer. Harmon. Anal., Birkh\"auser, Boston, in preparation.

\bibitem{GSL} GNU Scientific Library, {\tt http://www.gnu.org/software/gsl/}

\bibitem{G} E.~D.~Gluskin,
\emph{An octahedron is poorly approximated by random subspaces},
Funktsional. Anal. i Prilo\v{z}en. 20 (1986), no. 1, 14--20, 96.

\bibitem{GLSW} Y. Gordon, A. E. Litvak, C.~Sch\"utt, and E. Werner,
\emph{On the minimum of several random variables}, Proc. Amer. Math. Soc. 134 (2006), no. 12, 3665--3675.

\bibitem{GCD} R. Gribonval, V. Cevher, and M. Davies, 
\emph{Compressible priors for high-dimensional statistics}, preprint, 2011.

\bibitem{GS} R.~Gribonval and K.~Schnass, \emph{Dictionary identification - sparse matrix factorisation via $\ell_1$ minimisation},
IEEE Trans. Infor. Theory  56 (2010), no. 7, 3523--3539.

\bibitem{K} D. E. Knuth, The Art of Computer Programming, Vol. 2: \emph{Seminumerical Algorithms}, 3rd ed., 
Addison-Wesley 1998.

\bibitem{Le} M.~Ledoux, The concentration of measure phenomenon, AMS, 2001.

\bibitem{LT} M. Ledoux and M.~Talagrand, Probability in Banach spaces. Springer-Verlag, Berlin, 1991.

\bibitem{Mat} P. Mattila, {\it Geometry of sets and measures in Euclidean Spaces},
Cambridge University Press, 1995.

\bibitem{M} V. D. Milman, 
\emph{A new proof of A. Dvoretzky's theorem on cross-sections of convex bodies},
Funkcional. Anal. i Prilo\v{z}en. 5 (1971), no. 4, 28--37.

\bibitem{MS} V.~D.~Milman and G.~Schechtman,
\emph{Asymptotic theory of finite-dimensional normed spaces},
Lecture Notes in Mathematics, 1200, Springer-Verlag, Berlin, 1986.

\bibitem{N} A.~Naor, {\it The surface measure and cone measure on the sphere of $l\sb p\sp n$.}  
Trans. Amer. Math. Soc.  359  (2007),  no. 3, 1045--1079.

\bibitem{NR} A.~Naor and D.~Romik, {\it Projecting the surface measure of the sphere of $l\sb p\sp n$},  
Ann. Inst. H. Poincar\'e Probab. Statist.  39  (2003),  no. 2, 241--261.

\bibitem{O} K. Oskolkov, {\it Polygonal approximation of functions of two variables},
Math. USSR Sbornik 35, 851--861, (1979).

\bibitem {PKLH} J.~C.~Pesquet, H.~Krim, D.~Leporini and E.~Hamman, \emph{Bayesian approach to best basis selection},
In Proc. IEEE Int. Conf. on Acoustics, Speech, and Signal Proc., pages 2634 -- 2637, 1996.

\bibitem{RR} S.~T.~Rachev and L.~R\"uschendorf,
\emph{Approximate independence of distributions on spheres and their stability properties},
Ann. Probab. 19 (1991), no. 3, 1311--1337.

\bibitem{Ra} H. Rauhut, \emph{Compressive sensing and structured random matrices}, 
Theoretical Foundations and Numerical Methods for Sparse Recovery, (Massimo Fornasier Ed.) 
Radon Series on Computational and Applied Mathematics 9, 2010.

\bibitem{SZ} G. Schechtman and J. Zinn, {\it On the volume of the intersection of two $L_p^n$ balls},
Proc. AMS 110 (1), 217--224, (1990).

\bibitem{S} E.~Schmidt, \emph{Zur Theorie der linearen und nichtlinearen Integralgleichungen I}, Math. Anal. 63, 433--476, (1907).

\bibitem{T} V.~N.~Temlyakov, \emph{Nonlinear methods of approximation}, Found. Comput. Math. 3 (2003), no. 1, 33–-107.

\end{document}